\newcommand{\ipdots}[1][]{\ensuremath{{\left\langle{\cdot},{\cdot}\right\rangle}_{#1}}}
\newcommand{\ASS}{\ensuremath\mathrel{\mathop:}=}
\newcommand{\SSA}{=\ensuremath\mathrel{\mathop:}}
\newcommand{\DEF}{\ASS}
\newcommand{\Epub}[1]{} 
\newcommand{\ETAL}{{\sl et al.}\@\xspace}
\newcommand{\FED}{\SSA}
\newcommand{\LLRA}{\Longleftrightarrow}
\newcommand{\q}{\quad}
\newcommand{\qq}{\quad\quad}
\newcommand{\Rec}[1]{}   
\newcommand{\WH}{\widehat}
\newcommand{\ColVec}[1]{\left[\begin{array}{c}#1\end{array}\right]}
\newcommand{\Matrix}[2]{\left[\begin{array}{#1}#2\end{array}\right]}
\newcommand{\CC}{{\mathbb C}}
\newcommand{\NN}{{\mathbb N}}
\newcommand{\RR}{{\mathbb R}}
\newcommand{\A}{\star}
\renewcommand{\H}{\mathsf{H}}
\newcommand{\Image}{\mathsf{im}\,}
\newcommand{\Ker}{\mathsf{ker}\,}
\newcommand{\Span}{\mathsf{span}\,}
\newcommand{\T}{\mathsf{T}}
\newcommand{\bfA}{{\mathbf A}}
\newcommand{\bfAA}{{\mathbf A}\kern-.2em^\A}
\newcommand{\bfAH}{{\mathbf A}\kern-.2em^\H}
\newcommand{\bfAmH}{{\mathbf A}\kern-.2em^{-\H}}
\newcommand{\bfAT}{{\mathbf A}\kern-.15em^\T}
\newcommand{\bfAhat}{\widehat{\mathbf A}}
\newcommand{\bfB}{{\mathbf B}}
\newcommand{\bfC}{{\mathbf C}}
\newcommand{\bfD}{{\mathbf D}}
\newcommand{\bfE}{{\mathbf E}}
\newcommand{\bfEB}{{\mathbf E}_\bfB}
\newcommand{\bfI}{{\mathbf I}}
\newcommand{\bfJ}{{\mathbf J}}
\newcommand{\bfMA}{{\mathbf M}_{\mathbf A}}
\newcommand{\bfMB}{{\mathbf M}_{\mathbf B}}
\newcommand{\bfMI}{{\mathbf M}_{\mathbf I}}
\newcommand{\bfP}{{\mathbf P}}
\newcommand{\bfPB}{{\mathbf P}_\bfB}
\newcommand{\bfPA}{{\mathbf P}_\bfA}
\newcommand{\bfPI}{{\mathbf P}_\bfI}
\newcommand{\bfQ}{{\mathbf Q}}
\newcommand{\bfQB}{{\mathbf Q}_\bfB}
\newcommand{\bfQA}{{\mathbf Q}_\bfA}
\newcommand{\bfQI}{{\mathbf Q}_\bfI}
\newcommand{\bfR}{{\mathbf R}}
\newcommand{\bfS}{{\mathbf S}}
\newcommand{\bfU}{{\mathbf U}}
\newcommand{\bfV}{{\mathbf V}}
\newcommand{\bfW}{{\mathbf W}}
\newcommand{\bfY}{{\mathbf Y}}
\newcommand{\bfZ}{{\mathbf Z}}
\newcommand{\bfb}{{\mathbf b}}
\newcommand{\bfbhat}{\widehat{\mathbf b}}
\newcommand{\bff}{{\mathbf f}}
\newcommand{\bfh}{{\mathbf h}}
\newcommand{\bfr}{{\mathbf r}}
\newcommand{\bfrb}{\overline{\mathbf r}}
\newcommand{\bfrhat}{\widehat{\mathbf r}}
\newcommand{\bfs}{{\mathbf s}}
\newcommand{\bfu}{{\mathbf u}}
\newcommand{\bfv}{{\mathbf v}}
\newcommand{\bfvhat}{\widehat{\mathbf v}}
\newcommand{\bfw}{{\mathbf w}}
\newcommand{\bfx}{{\mathbf x}}
\newcommand{\bfxb}{\overline{\mathbf x}}
\newcommand{\bfxhat}{\widehat{\mathbf x}}
\newcommand{\bfy}{{\mathbf y}}
\newcommand{\bfyhat}{\widehat{\mathbf y}}
\newcommand{\calK}{\mathcal{K}}
\newcommand{\calS}{\mathcal{S}}
\newcommand{\calU}{\mathcal{U}}
\newcommand{\calX}{\mathcal{X}}
\newcommand{\indMLBICG}[1]{^{\mbox{\tiny\sf ML$(k)$BiCG}}}
\newcommand{\indMLSTAB}[1]{^{\mbox{\tiny\sf ML$(k)$BiCGSTAB}}}
\newcommand{\BICG}{{\sc BiCG}}
\newcommand{\BICGSTAB}{{\sc BiCG\-Stab}}
\newcommand{\CG}{{\sc CG}}
\newcommand{\CGNE}{{\sc CGNE}}
\newcommand{\CGNR}{{\sc CGNR}}
\newcommand{\CGS}{{\sc CGS}}
\newcommand{\CR}{{\sc CR}}
\newcommand{\FOM}{{\sc FOM}}
\newcommand{\GCG}{{\sc GCG}}
\newcommand{\GCR}{{\sc GCR}}
\newcommand{\GCRO}{{\sc GCRO}}
\newcommand{\GCROT}{{\sc GCROT}}
\newcommand{\GMERR}{{\sc GMErr}}
\newcommand{\GMRES}{{\sc GMRes}}
\newcommand{\IDRS}{{\sc IDR(s)}}
\newcommand{\MINRES}{{\sc MinRes}}
\newcommand{\RMINRES}{{\sc RMinRes}}
\newcommand{\SYMMLQ}{{\sc SymmLQ}}
\def\addots{\mathinner{\mskip1mu\raise1pt\hbox{.}\mskip2mu\raise4pt\hbox{.}
   \mskip2mu\raise7pt\vbox{\kern7pt\hbox{.}}\mskip1mu}}
\newcommand{\NOTE}[1]{}                  
\newcommand{\QUEST}[1]{}                
\renewcommand{\em}{\it}
\newtheorem{example}[theorem]{Example}
\title{A framework for deflated and augmented Krylov subspace
methods\footnotemark[1]}
\author{
	Andr\'{e} Gaul\footnotemark[3],
	Martin H. Gutknecht\footnotemark[2],
	J\"org Liesen\footnotemark[3] and
	Reinhard Nabben\footnotemark[3]
}
\begin{document}
	\renewcommand{\thefootnote}{\fnsymbol{footnote}}
    \footnotetext[1]{Version of \today}
	\footnotetext[2]{
		Seminar for Applied Mathematics, ETH Zurich, CH-8092 Zurich, Switzerland
		({\tt mhg@math.ethz.ch}).  Work started while this author was visiting the
		TU Berlin, supported by the DFG Forschungszentrum MATHEON and the Mercator
		Visiting Professorship Program of the DFG.
	}
	\footnotetext[3]{
		Institut f\"ur Mathematik, Technische Universit\"{a}t Berlin, Stra{\ss}e
		des 17. Juni 136, D-10623 Berlin, Germany ({\tt
		\{gaul,liesen,nabben\}@math.tu-berlin.de}). The work of Andr\'{e} Gaul,
		J\"{o}rg Liesen and Reinhard Nabben was supported by the DFG
		Forschungszentrum MATHEON. The work of J\"{o}rg Liesen was supported by the
		Heisenberg Program of the DFG.
	}
	\renewcommand{\thefootnote}{\arabic{footnote}}

	\maketitle

	\begin{abstract}
        We consider deflation and augmentation techniques for accelerating the
        convergence of Krylov subspace methods for the solution of nonsingular
        linear algebraic systems.  Despite some formal similarity, the two
        techniques are conceptually different from preconditioning.  Deflation
        (in the sense the term is used here) ``removes'' certain parts from the
        operator making it singular, while augmentation adds a subspace to the
        Krylov subspace (often the one that is generated by the singular
        operator); in contrast, preconditioning changes the spectrum of the
        operator without making it singular.  Deflation and augmentation have
        been used in a variety of methods and settings.  Typically, deflation is
        combined with augmentation to compensate for the singularity of the
        operator, but both techniques can be applied separately.

        We introduce a framework of Krylov subspace methods that satisfy a
        Galerkin condition. It includes the families of orthogonal residual (OR)
        and minimal residual (MR) methods.  We show that in this framework
        augmentation can be achieved either explicitly or, equivalently,
        implicitly by projecting the residuals appropriately and correcting the
        approximate solutions in a final step.
        We study conditions for a breakdown of the deflated methods, and we show
        several possibilities to avoid such breakdowns for the
        deflated \MINRES{} method. Numerical experiments illustrate properties
        of different variants of deflated \MINRES{} analyzed in this paper.
	\end{abstract}

	\begin{keywords}
        Krylov subspace methods, augmentation, deflation, subspace recycling,
        CG, MINRES, GMRES, RMINRES
	\end{keywords}

	\begin{AMS}
		65F10, 65F08
	\end{AMS}

	\section{Introduction}

    There are numerous techniques to accelerate the speed of convergence of
    Krylov subspace methods for solving large linear algebraic systems
	\begin{equation}
		\label{Axb}
		\bfA \bfx = \bfb,
	\end{equation}
    where $\bfA \in \CC^{N \times N}$ is nonsingular and $\bfb \in \CC^N$.  The
    most widely used technique is {\em preconditioning}. Here the system
    \eqref{Axb} is modified using left- and/or right-multiplications with a {\em
    nonsingular} matrix (called the preconditioner). A typical goal of
    preconditioning is to obtain a modified matrix that is in some sense close
    to the identity matrix.  For surveys of preconditioning techniques we refer
    to the books by Greenbaum~\cite[Part~II]{Gre97} and
    Saad~\cite[Chapters~9--14]{Saa03} and the survey of Benzi \cite{Ben02}.

    Here we consider two approaches for convergence acceleration that are called
    {\em deflation} and {\em augmentation}.  Let us briefly describe the main
    ideas of the two techniques.  In {\em deflation} the system \eqref{Axb} is
    multiplied (at least implicitly) with a suitably chosen projection, and the
    general goal is to ``eliminate'' components that supposedly slow down
    convergence.  Typically these are components that correspond to small
    eigenvalues.  Multiplication by the projection turns the system \eqref{Axb}
    into a consistent singular one, which is then solved by a Krylov subspace
    method.  We need to mention, however, that techniques have been proposed
    that move small eigenvalues of $\bfA$ to some large common value, say, to
    the value $1$; see \cite{BagCGR98,ErhBP96,KhaY95}.  Some authors refer to
    these techniques as ``deflation'' too.  In {\em augmentation} techniques the
    search space of the Krylov subspace method, which is at the same time the
    Galerkin test space, is ``enlarged'' by a suitably chosen subspace. A
    typical goal is to add information about the problem to the search space
    that is slowly revealed in the Krylov subspace itself, e.g. eigenvectors
    corresponding to small eigenvalues.

    Deflation and augmentation techniques can be combined with conventional
    preconditioning techniques. Then the projection and augmentation parameters
    have to be adapted to the preconditioned matrix.  In this paper, we assume
    that equation \eqref{Axb} is already in preconditioned form, i.e.,
    $\bfA$ is the preconditioned matrix and $\bfb$ the preconditioned right-hand
    side.  Details of preconditioning techniques will thus not be addressed
    here.

    We will now give a brief overview of existing deflation and augmentation
    strategies.  For a more comprehensive presentation we refer to Section~9 of
    the survey article by Simoncini and Szyld~\cite{SimS07}.  The first
    deflation and augmentation techniques in the context of Krylov subspace
    methods appeared in the papers of Nicolaides~\cite{Nic87} and
    Dost\'al~\cite{Dos88}. Both proposed deflated variants of the \CG{}
    method~\cite{HesS52} to accelerate the speed of convergence for symmetric
    positive definite (spd) matrices $\bfA$ arising from discretized elliptic partial
    differential equations.  Since these early works deflation and augmentation
    have become widely used tools.  Several authors working in different fields
    of numerical analysis applied them to many Krylov subspace methods, and they
    use a variety of techniques to determine a deflation subspace.  A review of
    all applications is well beyond this introduction.  We concentrate in the
    following on some --- but not all --- key contributions.

    For nonsymmetric systems Morgan~\cite{Mor95} and also Chapman and
    Saad~\cite{ChaS97} extracted approximate eigenvectors of $\bfA$ from the
    Krylov subspace generated by the \GMRES{} method~\cite{SaaS86}, and then
    they augmented the Krylov subspace with these vectors; for related
    references we refer to~\cite{FraV01}.  A comparable approach in the context
    of the \CG{} method for spd matrices $\bfA$ was
    described by Saad, Yeung, Erhel, and Guyomarc'h~\cite{SaaYEG00}. De
    Sturler~\cite{Stu96} introduced the \GCRO{} method, which involves an outer
    \GCR{} iteration~\cite{EisES83,Elm82} and an inner deflated \GMRES{} method
    where the space used for deflation depends on the outer iteration.  This
    method has been extended to \GCROT{} in~\cite{Stu99} to incorporate
    truncation strategies when restarts are necessary.  In~\cite{Kol98}
    Kolotilina used a twofold deflation technique for simultaneously deflating
    the $r$ largest and the $r$ smallest eigenvalues by an appropriate deflating
    subspace of dimension $r$.  An analysis of acceleration strategies
    (including augmentation) for minimal residual methods was given by
    Saad~\cite{Saa97} and for restarted methods by Eiermann, Ernst and
    Schneider~\cite{EieES00}.  The latter work analyzes minimal residual (MR)
    and orthogonal residual (OR) methods in a general framework that allows
    approximations from arbitrary correction spaces.  By using multiple
    correction spaces forming a direct sum, several cases of augmentation and
    deflation are discussed. The analysis concentrates on (nearly)
    $\bfA$-invariant augmentation spaces.

    In~\cite{Mor05} Morgan proposed a block-\GMRES{} method for multiple
    right-hand sides that deflates approximated eigenvectors when \GMRES{} is
    restarted. A similar method for solving systems with multiple shifts and
    multiple right-hand sides has been introduced by Darnell, Morgan and
    Wilcox~\cite{DarMW08}. Giraud \ETAL~\cite{GirGPV10} recently developed a
    flexible \GMRES{} variant with deflated restarting where the preconditioner
    may vary from one iteration to the next.  In~\cite{OlsS10} Olshanskii and
    Simoncini studied spectral properties of saddle point matrices
    preconditioned with a block-diagonal preconditioner and applied a deflated
    \MINRES{} method to the resulting symmetric and indefinite matrix in order
    to alleviate the influence of a few small outlying eigenvalues. Theoretical
    results for deflated \GMRES{} based on an exactly $\bfA$-invariant subspace
    have been presented in~\cite{YeuTV10}.

    In addition to deflation/augmentation spaces based on approximative
    eigenvectors, other choices have been studied.  Mansfield~\cite{Man90}
    showed how Schur complement-type domain decomposition methods can be seen as
    a series of deflations.  Nicolaides~\cite{Nic87} constructed a deflation
    technique based on piecewise constant interpolation from a set of $r$
    subdomains, and he pointed out that deflation might be effectively used with
    a conventional preconditioner. In~\cite{Man91} Mansfield used the same
    ``subdomain deflation'' in combination with damped Jacobi smoothing, and
    obtained a preconditioner that is related to the two-grid method.  Baker,
    Jessup and Manteuffel~\cite{BakJM05} proposed a \GMRES{} method that is
    augmented upon restarts by approximations to the error.

    In~\cite{NabV04, NabV06, NabV08} Nabben and Vuik described similarities
    between the deflation approach and domain decomposition methods for
    arbitrary deflation spaces. This comparison was extended to multigrid
    methods in~\cite{TanNVE09,TanMNV10}.

    This brief survey indicates that in principle deflation or augmentation can
    be incorporated into every Krylov subspace method. However, some methods may
    suffer from mathematical shortcomings like breakdowns or numerical problems
    due to round-off errors.  The main goal of this paper is not to add further
    examples to the existing collection, but to introduce first a suitable
    framework for a whole family of such augmented and deflated methods
    (Section~\ref{secdeflaug}) and then to prove some results just assuming this
    framework (Section~\ref{secequivthm}).  The framework focuses on Krylov
    subspace methods whose residuals satisfy a certain Galerkin condition with
    respect to a true or formal inner product.  In Section~\ref{secequivthm}, we
    mathematically characterize the equivalence of two approaches for realizing
    such methods and discuss them along with potential pitfalls.  We then
    discuss known approaches to deflate \CG{} (Section \ref{secHpdA}), \GMRES{}
    (Section \ref{secgeneralA}), and \MINRES{} (Section \ref{secHermitianA}) in
    the light of our general equivalence theorem.  Among other results, this
    will show that a recent version of deflated \MINRES{}, which is part of the
    \RMINRES{} (``recycling'' \MINRES{}) method suggested by Wang, de Sturler
    and Paulino \cite{WanSP07}, can break down and how these breakdowns can be
    avoided by either adapting the right-hand side or the initial guess.
    We do not focus on specific implementations or algorithmic
    details but on the mathematical theory of these methods. For the numerical
    application in Section~\ref{secnumexp} we draw on the most robust \MINRES{}
    implementation that is available.

	\section{A framework for deflated and augmented Krylov methods}
	\label{secdeflaug}
	
    In this section we describe a general framework for deflation and
    augmentation, which simultaneously covers several Krylov subspace methods
    whose residuals satisfy a Galerkin condition.  Given an initial guess
    $\bfx_0\in\CC^N$, a positive integer $n$, an $n$-dimensional subspace
    $\calS_n$ of $\CC^N$, and a nonsingular matrix $\bfB\in\CC^{N\times N}$, let
    us first consider an approximation $\bfx_n$ to the solution $\bfx$ of the
    form
	\begin{align}
		\bfx_n \in \bfx_0 + \calS_n, \label{xn}
	\end{align}
    so that the corresponding residual
    \[
        \bfr_n \DEF \bfb - \bfA \bfx_n \in \bfr_0 + \bfA \calS_n
    \]
    satisfies
	\begin{equation}
		\bfr_n \perp \bfB \calS_n. \label{rn}
	\end{equation}

    If $\bfB^\H\bfA$ is Hermitian and positive definite (Hpd) then $\bfB^\H\bfA$
    induces an inner product $\ipdots[\bfB^\H\bfA]$, a corresponding norm
    $\|\cdot\|_{\bfB^\H\bfA}$, and an orthogonality $\perp_{\bfB^\H\bfA}$.
    Imposing equations \eqref{xn} and \eqref{rn} can then be seen to be
    equivalent to solving the following minimization problem:
    \begin{equation}
        \label{min}
        \text{find}\quad \bfx_n\in\bfx_0+\calS_n \quad\text{s.t.}\quad
        \|\bfx-\bfx_n\|_{\bfB^\H\bfA}
        =\min_{\bfy\in\bfx_0+\calS_n} \|\bfx-\bfy\|_{\bfB^\H\bfA}.
    \end{equation}
    Note that due to $\bfr_n = \bfA (\bfx - \bfx_n)$ the condition \eqref{rn}
    can be written as orthogonality condition for the error $\bfx - \bfx_n$:
    \begin{equation}
		(\bfx - \bfx_n) \perp_{\bfB^\H\bfA} \calS_n. \label{xn-x}
	\end{equation}

    The following two cases where $\bfB^\H\bfA$ is Hpd are of particular
    interest:
    \par\medskip\noindent
    \begin{enumerate}
        \renewcommand{\labelenumi}{(\arabic{enumi})}
        \item $\bfB=\bfI$ if $\bfA$ itself is Hpd;
        \item $\bfB=\bfA$ for general nonsingular $\bfA$.
    \end{enumerate}
    \par\medskip\noindent
    The case (1) is the one where \eqref{rn} is a typical Galerkin condition:
    $\bfA$ is Hpd and the residual $\bfr_n$ is orthogonal to the linear search
    space $\calS_n$ for $\bfx_n - \bfx_0$.  In \eqref{min} we then have
    \begin{equation}
       \label{Ainverse-norm}
       \|\bfx-\bfx_n\|_{\bfA} =  \|\bfr_n\|_{\bfA^{-1}} ,
    \end{equation}
    so while the error is minimal in the $\bfA$--norm, the residual is minimal
    in the ${\bfA^{-1}}$--norm.

    In this paper we will refer to \eqref{rn} also in the case (2) as a
    Galerkin condition, because the search space and the test space are still
    essentially the same. However, in this case
    \begin{equation}
       \label{2-norm-res}
       \|\bfx-\bfx_n\|_{\bfA^\H\bfA} =  \|\bfr_n\|_2 ,
    \end{equation}
    so \eqref{rn} implies that the $2$--norm of the residual is minimized.
    Consequently, in both cases a minimization property holds.

    If the search space $\calS_n$ is the $n$-th Krylov subspace generated by
    $\bfA$ and the initial residual $\bfr_0:=\bfb-\bfA\bfx_0$, i.e., if
    \begin{equation}
       \label{calKn(A)}
       \calS_n = \calK_n \left( \bfA, \bfr_0 \right)
       \DEF \Span\{\bfr_0, \bfA\bfr_0, \dots, \bfA^{n-1}\bfr_0\},
    \end{equation}
    then, in the case (1), conditions \eqref{xn}--\eqref{rn} mathematically characterize
    the \CG{} method~\cite{HesS52}.  It is the prototype of an {\em Orthogonal
    Residual (OR) method} characterized by \eqref{xn} and \eqref{rn} with
    $\bfB=\bfI$.

    In the case (2), conditions \eqref{xn}--\eqref{rn} with $\calS_n=\calK_n
    \left( \bfA, \bfr_0 \right)$ mathematically characterize the
    \GCR~\cite{EisES83} and \GMRES{}~\cite{SaaS86} methods and, for Hermitian
    $\bfA$, the \MINRES{}~\cite{PaiS75} method.  If $\bfA$ is even Hpd, we can
    resort to Stiefel's Conjugate Residual (\CR{}) method \cite{Sti55}.  All
    these are prototype {\em Minimal Residual (MR) methods} characterized by
    \eqref{xn} and \eqref{rn} with $\bfB=\bfA$.

    Orthogonal Residual and Minimal Residual methods often come in pairs defined
    by the properties of $\bfA$, the Krylov search space, and, to some extent,
    the fundamental structure of the algorithms.  Examples of such pairs are
    \CG/\CR, \GCG/\GCR, \FOM/GMRES, and \CGNE/\CGNR.  It has been pointed out
    many times, see, e.g., \cite{Bro91,Cul95,EieE01,EieES00,GutRoz01b}, that the
    residuals of these pairs of OR/OM methods and in particular the residual
    norms are related in a simple fashion.

    A fact related to the OR/MR residual connection is that the iterates and
    residuals of an MR method can be found from those of the corresponding OR
    method by a smoothing process introduced by Sch\"onauer; see
    \cite{Wei90,Wei94b,GutRoz01b,GutRoz02}.  The reverse process also exists
    \cite{GutRoz01b}.  Again these processes hold for the residuals of the
    deflated system and, since they are identical, for those of the explicit
    augmentation approach.

    If $\bfB^\H\bfA$ is not Hpd, the minimization property \eqref{min} no longer
    makes sense, but we may still request that the orthogonality condition
    \eqref{rn} or, equivalently, \eqref{xn-x} hold. Resulting algorithms may
    then break down since an approximate solution $\bfx_n$ satisfying the
    conditions may not exist for some $n$.  Nevertheless, such methods are
    occasionally applied in practice.  In particular, the choice
    \par\medskip\noindent
    \begin{enumerate}
        \renewcommand{\labelenumi}{(\arabic{enumi})}
        \addtocounter{enumi}{2}
        \item $\bfB=\bfI$ and $\bfA$ nonsingular
    \end{enumerate}
    \par\medskip\noindent
    covers the full orthogonalization method (FOM) of Saad \cite{Saa81,Saa03},
    which is sometimes also referred to as Arnoldi method for linear algebraic
    systems.

    For minimizing the error $\bfx_n - \bfx$ in the 2-norm one has to choose
    \par\medskip\noindent
    \begin{enumerate}
        \renewcommand{\labelenumi}{(\arabic{enumi})}
        \addtocounter{enumi}{3}
        \item $\bfB=\bfAmH$ and $\bfA$ nonsingular.
    \end{enumerate}
    \par\medskip\noindent
    Since multiplication by $\bfAmH$ is not feasible, these methods only work
    for particular search spaces; the simplest choice is
    \begin{equation}
       \label{calKn(A^H)}
       \calS_n = \bfAH \calK_n \left( \bfAH, \bfr_0 \right).
    \end{equation}
    Unlike the normal Krylov search space of \eqref{calKn(A)}, this one has the
    drawback that the (exact) solution of the system need not be in one of these
    spaces, i.e., even in exact arithmetic convergence is not guaranteed.  One
    interesting example based on this choice is the Generalized Minimum Error
    (\GMERR{}) method of Weiss \cite{Wei94a}.  Earlier, for spd matrices, such a
    method was proposed by Friedman \cite{Fri63}, and an alternative algorithm
    was mentioned by Fletcher \cite{Fle76}.  Symmetric indefinite systems can be
    treated in this way with the \SYMMLQ{} algorithm of Paige and Saunders
    \cite{PaiS75}; see also Freund \cite{Fre90} for a review of methods
    featuring this optimality criterion and yet another algorithm called ME to
    achieve it.

    Finally, we can easily incorporate the \CGNR{} method \cite{HesS52} for
    solving overdetermined linear systems in the setting of \eqref{xn} and
    \eqref{rn} by choosing the appropriate Krylov search space.  Given such a
    system
    \begin{equation}
       \label{Ex=f}
       \bfE \bfx = \bff
    \end{equation}
    with a full-rank $M \times N$--matrix $\bfE$ (where $M \geq N$), the
    corresponding normal equations are $\bfE^\H \bfE \bfx = \bfE^\H \bff$, i.e.,
    $\bfA \bfx = \bfb$ with $\bfA \DEF \bfE^\H \bfE$ and $\bfb \DEF \bfE^\H
    \bff$.  Since $\bfA$ is Hpd, we can apply the \CG{} method which corresponds
    to the case (1) and
    \begin{equation}
       \label{calKn(E^HE)}
       \calS_n = \calK_n \left( \bfE^\H \bfE, \bfE^\H \bfs_0 \right)
    \end{equation}
    with $\bfs_0\DEF \bff - \bfE\bfx_0$.
    In this situation we have to distinguish between the residuals $\bfr_n \DEF
    \bfb - \bfA \bfx_n = \bfE^\H \bff - \bfE^\H \bfE \bfx_n$ of the normal
    equations and the residuals $\bfs_n \DEF \bff - \bfE \bfx_n$ of the given
    system \eqref{Ex=f}. The \CGNR\ method allows one to keep track of both.
    The latter residuals satisfy
    \begin{equation}
       \label{trueres}
       \bfs_n \in \bfs_0 + \bfE \calK_n (\bfE^\H \bfE, \bfE^\H \bff), \qq
       \bfs_n \perp \bfE \calK_n (\bfE^\H \bfE, \bfE^\H \bff),
    \end{equation}
    and they can be seen to minimize the 2-norm of $\bfs_n$. Note that it can be
    viewed as an MR method with a possibly non-square $\bfB = \bfE$; see
    \cite{GutRoz01b}.

    A method that also fits into our framework, though with some modifications,
    is the \CGNE{} method, also called Craig's method \cite{Cra55}, which can
    also be used for solving underdetermined linear algebraic
    systems \eqref{Ex=f} with a full-rank $M \times N$--matrix $\bfE$.
    The search space for $\bfx_n$ in this case is \eqref{calKn(E^HE)}, but
    the Galerkin condition becomes $\bfs_n \perp \calK_n(\bfE \bfE^\H,\bfs_0)$.

    Since we are aiming at a {\em general framework}, let us for the moment
    consider an arbitrary, possibly singular matrix $\bfAhat\in\CC^{N\times N}$
    and an arbitrary vector $\bfvhat\in\CC^N$, such that the Krylov subspace
    $\calK_n (\bfAhat, \bfvhat)$ has dimension $n$.

    Instead of a search space of the form $\calS_n=\calK_n \left( \bfA,
    \bfr_0\right)$ we focus from now on {\em augmented} Krylov subspaces of
    the form
	\begin{align}
        \calS_n \DEF \calK_n(\bfAhat, \bfvhat) + \calU. \label{Sn}
	\end{align}
    We suppose that $\calU$ has dimension~$k$, $0<k<N$, and denote by $\bfU \in
    \CC^{N \times k}$ a matrix whose columns form a basis of $\calU$, and by
    $\bfV_n \in \CC^{N \times n}$ one whose columns form a basis of
    $\calK_n(\bfAhat,\bfvhat)$, so that \eqref{xn} can be written as
	\begin{equation}\label{xnVU}
		\bfx_n = \bfx_0 + \bfV_n \bfy_n + \bfU \bfu_n
	\end{equation}
    for some vectors $\bfy_n\in\CC^n$ and $\bfu_n\in\CC^k$.  Of course, $\bfU$
    may be redefined when an algorithm like \GMRES\ is restarted, but we will
    not account for that in our notation.

    Assuming the general structure of the search space $\calS_n$ in \eqref{Sn} we will now
    investigate {\em augmented} Galerkin-type methods that still satisfy
    \eqref{xn} and \eqref{rn}.

	\section{A general equivalence theorem}
	\label{secequivthm}

    Our goal in this section is to show that augmentation can be achieved
    either explicitly as in \eqref{Sn}, or implicitly, namely by
    projecting the residuals appropriately and correcting the approximate
    solutions in a final step. Our main result is stated in
    Theorem~\ref{thmequi} below.

    In order to satisfy \eqref{rn}, the residual $\bfr_n=\bfb-\bfA\bfx_0$ must
    be orthogonal to both $\bfB\calK_n(\bfAhat, \bfvhat)$ and $\bfB\calU$, hence
    it must satisfy the pair of orthogonality conditions
	\begin{equation}\label{rncond}
        \bfr_n \perp \bfB\, \calK_n(\bfAhat, \bfvhat) \quad\mbox{and} \quad
		\bfr_n \perp \bfB\, \calU.
	\end{equation}
    Let us concentrate on the second condition of~\eqref{rncond}, which can be written as
	$$		
    \bf0 = \bfU^\H \bfB^\H \bfr_n
	= \bfU^\H \bfB^\H \left(\bfr_0 - \bfA \bfV_n \bfy_n - \bfA \bfU \bfu_n\right)
	= \bfU^\H \bfB^\H \left(\bfr_0 - \bfA \bfV_n \bfy_n\right) - \bfEB \bfu_n ,
	$$
	where
	\begin{equation}\label{EB}
		\bfE_\bfB \DEF \bfU^\H \bfB^\H \bfA \bfU\in\CC^{k\times k}.
	\end{equation}
    Clearly, if $\bfB^\H \bfA$ is Hpd, then $\bfE_\bfB$ is Hpd too --- though in
    a smaller space --- and thus nonsingular.  In the following derivation,
    $\bfB^\H \bfA$ need not be Hpd, but we must then assume that $\bfE_\bfB$ is
    nonsingular.  Then the second orthogonality condition is equivalent to
	\begin{equation}
		\bfu_n = \bfE_\bfB^{-1} \bfU^\H \bfB^\H \left(\bfr_0 - \bfA \bfV_n \bfy_n\right). \label{un}
	\end{equation}
    Substituting this into \eqref{xnVU} gives
	\begin{align}
		\bfx_n &= \bfx_0 + \bfV_n \bfy_n + \bfU \left( \bfE_\bfB^{-1} \bfU^\H \bfB^\H
		\left(\bfr_0 - \bfA \bfV_n \bfy_n \right)\right) \notag \\
		&= \left( \bfI - \bfU \bfE_\bfB^{-1} \bfU^\H \bfB^\H \bfA\right)
		\left( \bfx_0 + \bfV_n \bfy_n \right) + \bfU \bfE_\bfB^{-1} \bfU^\H \bfB^\H \bfb,
		\label{xnPfull} \\
		\bfr_n &= \bfr_0 - \bfA \bfV_n \bfy_n - \bfA \bfU \left( \bfE_\bfB^{-1} \bfU^\H \bfB^\H
		\left(\bfr_0 - \bfA \bfV_n \bfy_n \right)\right) \notag\\
		&= \left( \bfI - \bfA \bfU \bfE_\bfB^{-1} \bfU^\H \bfB^\H \right)
		\left( \bfr_0 - \bfA \bfV_n \bfy_n \right).
		\label{rnPfull}
	\end{align}
	To simplify the notation we define the ($N\times N$)--matrices
	\begin{align}
		\bfMB  &\DEF \bfU \bfE_\bfB^{-1} \bfU^\H =
		\bfU \left( \bfU^\H \bfB^\H \bfA \bfU \right)^{-1} \bfU^\H,
		\nonumber\\
		\bfPB  &\DEF \bfI - \bfA \bfMB \bfB^\H, \label{QPP}\\
		\bfQB &\DEF \bfI - \bfMB \bfB^\H \bfA.\nonumber
	\end{align}
    Using these matrices the equations \eqref{xnPfull} and \eqref{rnPfull} take
    the form
	\begin{align}
		\bfx_n &= \bfQB \left(\bfx_0 + \bfV_n \bfy_n\right) + \bfMB \bfB^\H \bfb , \label{xnP} \\
		\bfr_n &= \bfPB \left( \bfr_0 - \bfA \bfV_n \bfy_n \right) \label{rnP}.
	\end{align}
	Note that imposing the second orthogonality condition in \eqref{rncond} on
	the residual $\bfr_n$ has determined the vector $\bfu_n$, which has therefore
	``disappeared'' in \eqref{xnP}--\eqref{rnP}.
	We next state some basic properties of the matrices $\bfPB$ and $\bfQB$. The
	proof of these properties is straightforward, and is therefore omitted.
	\begin{lemma}
		\label{lemproj}
        Let $\bfA,\bfB\in\CC^{N\times N}$ and $\bfU\in\CC^{N\times k}$ be such
        that $\bfE_\bfB\DEF \bfU^\H\bfB^\H\bfA\bfU$ is nonsingular (which
        implies that $\rank\bfU=k$). Then the matrices in \eqref{QPP} are
        well defined and the following statements hold:
		\begin{enumerate}
			\item \label{lemprojPAU} $\bfPB^2 = \bfPB$, $\bfPB \bfA \bfU = \bf0$, and
				$\bfU^\H \bfB^\H \bfPB = \bf0$, i.e., $\bfPB$ is the projection onto
				$\left( \bfB \calU \right)^\perp$ along $\bfA \calU$.
			\item \label{lemprojPtU} $\bfQB^2 = \bfQB$, $\bfQB \bfU = \bf0$,
                and $\bfU^\H \bfB^\H \bfA \bfQB = \bf0$, i.e., $\bfQB$ is the
                projection onto $\left(\bfA^\H \bfB \calU \right)^\perp$ along $\calU$.
			\item \label{PAAPt} $\bfPB \bfA = \bfPB \bfA \bfQB = \bfA \bfQB$.
            \item $\bfPA = \bfPA^\H$, i.e., $\bfPA$ is an orthogonal projection.
		\end{enumerate}
	\end{lemma}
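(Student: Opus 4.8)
The plan is to establish everything by direct matrix manipulation, after first extracting from the single hypothesis that $\bfE_\bfB$ is nonsingular the several full-rank facts on which the geometric claims rest. I would begin by noting that $\bfE_\bfB=(\bfB\bfU)^\H(\bfA\bfU)$ forces each of $\bfA\bfU$, $\bfB\bfU$, $\bfA^\H\bfB\bfU$, and hence $\bfU$, to have column rank $k$: from $\bfA\bfU\bfc=\bf0$ we get $\bfE_\bfB\bfc=\bf0$, from $\bfB\bfU\bfc=\bf0$ we get $\bfU^\H\bfA^\H\bfB\bfU\bfc=\bfE_\bfB^\H\bfc=\bf0$, and from $\bfA^\H\bfB\bfU\bfc=\bf0$ we again get $\bfE_\bfB^\H\bfc=\bf0$; in each case $\bfc=\bf0$. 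The same computations show that the subspace pairs in the statement are complementary: $\bfA\calU\cap(\bfB\calU)^\perp=\{\bf0\}$ since $\bfU^\H\bfB^\H(\bfA\bfU\bfc)=\bfE_\bfB\bfc$, and $\calU\cap(\bfA^\H\bfB\calU)^\perp=\{\bf0\}$ since $(\bfA^\H\bfB\bfU)^\H\bfU\bfc=\bfE_\bfB\bfc$; counting dimensions ($\dim\bfA\calU=\dim\calU=k$ and $\dim(\bfB\calU)^\perp=\dim(\bfA^\H\bfB\calU)^\perp=N-k$) then gives $\CC^N=\bfA\calU\oplus(\bfB\calU)^\perp=\calU\oplus(\bfA^\H\bfB\calU)^\perp$.

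For part (1) I would substitute the definitions and collapse $\bfU^\H\bfB^\H\bfA\bfU$ to $\bfE_\bfB$ wherever it appears: $\bfPB\bfA\bfU=\bfA\bfU-\bfA\bfU\bfE_\bfB^{-1}\bfE_\bfB=\bf0$, then $\bfU^\H\bfB^\H\bfPB=\bfU^\H\bfB^\H-\bfE_\bfB\bfE_\bfB^{-1}\bfU^\H\bfB^\H=\bf0$, and then $\bfPB^2=\bfPB-\bfPB\bfA\bfU\bfE_\bfB^{-1}\bfU^\H\bfB^\H=\bfPB$. These identities say $\bfA\calU\subseteq\Ker\bfPB$ and $\Image\bfPB\subseteq(\bfB\calU)^\perp$; since $\CC^N=\Image\bfPB\oplus\Ker\bfPB$ and $\CC^N=(\bfB\calU)^\perp\oplus\bfA\calU$ from the preliminary step, both inclusions are equalities, which is precisely the assertion that $\bfPB$ projects onto $(\bfB\calU)^\perp$ along $\bfA\calU$. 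Part (2) is the mirror computation: $\bfQB\bfU=\bfU-\bfU\bfE_\bfB^{-1}\bfE_\bfB=\bf0$, $\bfU^\H\bfB^\H\bfA\bfQB=\bfU^\H\bfB^\H\bfA-\bfE_\bfB\bfE_\bfB^{-1}\bfU^\H\bfB^\H\bfA=\bf0$, $\bfQB^2=\bfQB$, and the decomposition $\CC^N=\calU\oplus(\bfA^\H\bfB\calU)^\perp$ then pins down $\Ker\bfQB=\calU$ and $\Image\bfQB=(\bfA^\H\bfB\calU)^\perp$.

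Part (3) is the identity $\bfPB\bfA=\bfA-\bfA\bfMB\bfB^\H\bfA=\bfA(\bfI-\bfMB\bfB^\H\bfA)=\bfA\bfQB$, after which $\bfPB\bfA\bfQB=\bfA\bfQB^2=\bfA\bfQB$ using idempotency of $\bfQB$. For part (4) I would specialize $\bfB=\bfA$, which is legitimate because $\bfE_\bfA=(\bfA\bfU)^\H(\bfA\bfU)$ is Hpd---hence invertible---as soon as $\bfA\bfU$ has full column rank, already guaranteed above. Then $\bfE_\bfA^{-1}$ is Hermitian, so $\bfMA=\bfU\bfE_\bfA^{-1}\bfU^\H$ is Hermitian, whence $(\bfA\bfMA\bfA^\H)^\H=\bfA\bfMA^\H\bfA^\H=\bfA\bfMA\bfA^\H$ and $\bfPA=\bfI-\bfA\bfMA\bfA^\H$ is Hermitian; together with $\bfPA^2=\bfPA$ from part (1) this makes $\bfPA$ an orthogonal projection. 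I do not expect a genuine obstacle here: the only point requiring care is the bookkeeping in the first paragraph---teasing all the needed full-rank and complementarity statements out of the lone hypothesis on $\bfE_\bfB$---since without them one only obtains the one-sided inclusions $\Image\subseteq\cdots$ and $\Ker\supseteq\cdots$ rather than the exact range-and-kernel descriptions claimed in (1) and (2).
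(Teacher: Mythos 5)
Your computations are all correct, and since the paper explicitly omits the proof of this lemma as ``straightforward,'' your direct verification is exactly the routine argument the authors had in mind; the algebraic identities in (1)--(3), the rank and complementarity bookkeeping in your first paragraph, and the Hpd argument for (4) all check out. The one place where your reasoning as written does not close is the upgrade from the one-sided inclusions to the exact range/kernel description: from $\CC^N=\Image\bfPB\oplus\Ker\bfPB$, $\CC^N=(\bfB\calU)^\perp\oplus\bfA\calU$, $\Image\bfPB\subseteq(\bfB\calU)^\perp$ and $\bfA\calU\subseteq\Ker\bfPB$ alone one \emph{cannot} conclude equality --- the zero map together with any nontrivial decomposition of $\CC^N$ satisfies all four conditions. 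The dimension count only gives $\dim\Image\bfPB-\dim(\bfB\calU)^\perp=\dim\bfA\calU-\dim\Ker\bfPB$, and both sides may be strictly negative. The missing ingredient is the reverse inclusion on the kernel, which is immediate from the structure of the complementary projection: $\Ker\bfPB=\Image(\bfI-\bfPB)=\Image\bigl(\bfA\bfU\bfE_\bfB^{-1}\bfU^\H\bfB^\H\bigr)\subseteq\bfA\calU$, hence $\Ker\bfPB=\bfA\calU$, and then $\dim\Image\bfPB=N-k$ does force $\Image\bfPB=(\bfB\calU)^\perp$. The same one-line patch ($\Ker\bfQB=\Image(\bfI-\bfQB)\subseteq\calU$) is needed in part (2). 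With that observation inserted, the proof is complete.
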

    It remains to impose the first orthogonality condition in \eqref{rncond},
    which will determine the vector $\bfy_n$. To this end, let
    $$
        \bfxhat_n \DEF \bfx_0 + \bfV_n \bfy_n\in \bfx_0 + \calK_n(\bfAhat,
        \bfvhat),
    $$
    so that by \eqref{xnP} $\bfx_n = \bfQB \bfxhat_n + \bfMB \bfB^\H \bfb$.
    Using the definition of $\bfPB$ in \eqref{QPP} and statement~\ref{PAAPt} of
    Lemma~\ref{lemproj}, this orthogonality condition reads
	$$
        \bfr_n = \bfb - \bfA \bfx_n
        = \bfb - \bfA\bfQB\bfxhat_n - \bfA\bfMB\bfB^\H\bfb
        = \bfPB (\bfb - \bfA \bfxhat_n) \perp \bfB \calK_n(\bfAhat, \bfvhat).
    $$
	We summarize these considerations in the following theorem.
    \begin{theorem}
        \label{thmequi}
        Let the assumptions of Lemma~\ref{lemproj} hold and let
        $\bfAhat\in\CC^{N\times N}$, $\bfvhat\in\CC^N$ and $n\in\NN$ be such
        that the Krylov subspace $\calK_n(\bfAhat,\bfvhat)$ has dimension $n$.
        Furthermore, let $\bfb,\bfx_0\in\CC^N$ be arbitrary.

        Then, with $\calU\DEF\Image(\bfU)$ and the definitions from \eqref{QPP}
        the following two pairs of conditions,
        \begin{align}
			\begin{aligned}
                \bfx_n &\in \bfx_0 + \calK_n(\bfAhat, \bfvhat) + \calU , \\
				\bfr_n &\DEF \bfb - \bfA \bfx_n \perp \bfB \calK_n(\bfAhat,
                \bfvhat) + \bfB \calU,
			\end{aligned}
			\label{ansatz1}
		\end{align}
		and
		\begin{align}
			\begin{aligned}
                \bfxhat_n &\in \bfx_0 + \calK_n(\bfAhat, \bfvhat) , \\
				\bfrhat_n &\DEF \bfPB (\bfb - \bfA \bfxhat_n) \perp \bfB
                \calK_n(\bfAhat, \bfvhat).
			\end{aligned}
			\label{ansatz2}
		\end{align}
		are equivalent for $n\geq1$ in the sense that
		\begin{equation}\label{xnP'}
			\bfx_n = \bfQB \bfxhat_n + \bfMB \bfB^\H \bfb ,
			\quad\mbox{and}\quad
			\bfr_n = \bfrhat_n.
		\end{equation}
    \end{theorem}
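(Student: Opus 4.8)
The plan is to establish the two implications \eqref{ansatz1}$\Rightarrow$\eqref{ansatz2} and \eqref{ansatz2}$\Rightarrow$\eqref{ansatz1}, in each case exhibiting the correspondence \eqref{xnP'}; most of the needed algebra has already appeared in the derivation preceding the theorem, so the proof is largely a matter of organizing it. For \eqref{ansatz1}$\Rightarrow$\eqref{ansatz2} I would start from $\bfx_n$ satisfying \eqref{ansatz1}, write it as $\bfx_n = \bfx_0 + \bfV_n\bfy_n + \bfU\bfu_n$ as in \eqref{xnVU}, and observe that the second orthogonality condition of \eqref{ansatz1}, namely $\bfU^\H\bfB^\H\bfr_n = \bf0$, is precisely the linear system with the nonsingular matrix $\bfE_\bfB$ whose solution is \eqref{un}; hence $\bfu_n$ is forced and, substituting it back, \eqref{xnP}--\eqref{rnP} hold with $\bfxhat_n \DEF \bfx_0 + \bfV_n\bfy_n \in \bfx_0 + \calK_n(\bfAhat,\bfvhat)$. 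Since $\bfr_0 - \bfA\bfV_n\bfy_n = \bfb - \bfA\bfxhat_n$, \eqref{rnP} becomes $\bfr_n = \bfPB(\bfb - \bfA\bfxhat_n) \FED \bfrhat_n$, which is exactly \eqref{xnP'}, and the first orthogonality condition of \eqref{ansatz1} then reads literally as the orthogonality requirement in \eqref{ansatz2}.

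For the converse I would take $\bfxhat_n = \bfx_0 + \bfV_n\bfy_n$ and $\bfrhat_n = \bfPB(\bfb - \bfA\bfxhat_n)$ satisfying \eqref{ansatz2}, define $\bfx_n$ and $\bfr_n$ by \eqref{xnP'}, and verify each clause of \eqref{ansatz1}. The inclusion $\bfx_n \in \bfx_0 + \calK_n(\bfAhat,\bfvhat) + \calU$ follows from $\bfQB\bfxhat_n = \bfxhat_n - \bfMB\bfB^\H\bfA\bfxhat_n$ together with $\bfxhat_n \in \bfx_0 + \calK_n(\bfAhat,\bfvhat)$ and the fact that $\bfMB\bfB^\H\bfA\bfxhat_n,\ \bfMB\bfB^\H\bfb \in \calU$ (because $\bfMB = \bfU\bfE_\bfB^{-1}\bfU^\H$). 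Using $\bfPB = \bfI - \bfA\bfMB\bfB^\H$ and $\bfA\bfQB = \bfPB\bfA$ (statement~\ref{PAAPt} of Lemma~\ref{lemproj}) one computes $\bfr_n = \bfb - \bfA\bfx_n = \bfb - \bfA\bfMB\bfB^\H\bfb - \bfA\bfQB\bfxhat_n = \bfPB\bfb - \bfPB\bfA\bfxhat_n = \bfPB(\bfb - \bfA\bfxhat_n) = \bfrhat_n$, so the second half of \eqref{xnP'} holds; the first orthogonality condition of \eqref{ansatz1} is then inherited from \eqref{ansatz2}, and the second, $\bfr_n \perp \bfB\calU$, follows from $\bfU^\H\bfB^\H\bfPB = \bf0$ (statement~\ref{lemprojPAU} of Lemma~\ref{lemproj}).

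The individual steps are routine matrix manipulations relying only on the projection identities already recorded in Lemma~\ref{lemproj}, so I do not expect a genuine obstacle. The one place where a careful formulation matters is that the decomposition \eqref{xnVU} of $\bfx_n$ need not be unique when $\calK_n(\bfAhat,\bfvhat) \cap \calU \neq \{\bf0\}$: different choices give different $\bfxhat_n$, but $\bfQB\bfxhat_n$ (hence $\bfx_n$) and $\bfr_n$ are the same for all of them, since the difference of two admissible $\bfxhat_n$ lies in $\calU$ and $\bfQB\bfU = \bf0$ by statement~\ref{lemprojPtU} of Lemma~\ref{lemproj}. Accordingly the theorem should be read as matching up the solution sets of \eqref{ansatz1} and \eqref{ansatz2} through the formulas \eqref{xnP'} rather than as asserting a literal bijection, and I expect this bookkeeping to be the only point requiring real care.
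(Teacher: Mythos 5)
Your proposal is correct and follows essentially the same route as the paper, which proves the theorem by the derivation immediately preceding it: the second orthogonality condition determines $\bfu_n$ via the nonsingular matrix $\bfE_\bfB$, substitution yields \eqref{xnP}--\eqref{rnP}, and the first orthogonality condition then reads as the condition in \eqref{ansatz2}. Your explicit two-way organization and the remark that non-uniqueness of the splitting \eqref{xnVU} is harmless because $\bfQB$ annihilates $\calU$ are careful additions, but not a different argument.
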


    We call \eqref{ansatz1} the {\em explicit} deflation and augmentation
    approach because the augmentation space $\calU$ is explicitly included in
    the search space.  The equivalent conditions \eqref{ansatz2} show that the
    explicit inclusion of $\calU$ can be omitted when instead we first construct
    the iterate $\bfxhat_n \in \bfx_0 + \calK_n(\bfAhat, \bfvhat)$ so that the
    projected residual $\bfrhat_n=\bfPB(\bfb - \bfA \bfxhat_n)$ satisfies the
    given orthogonality condition and then apply the affine correction
    \eqref{xnP'} to $\bfxhat_n$, whose projected residual equals the one of
    $\bfx_n$.  We call this second option the {\em implicit} deflation and
    augmentation approach.

    Note that the theorem makes no assumption on relations between $\bfAhat$,
    $\bfvhat$, and $\calU$.  The only assumption on the augmentation space
    $\calU$ is that the matrix $\bfU^\H\bfB^\H\bfA\bfU$ is nonsingular.
    (Clearly, if this holds for one basis of $\calU$ it holds for all.)
    Moreover, in the theorem $\bfAhat$ and $\bfvhat$ are arbitrary except for
    the assumption that $\calK_n(\bfAhat,\bfvhat)$ has dimension $n$.

    In practice, $\bfAhat$ and $\bfvhat$ should be somehow related to $\bfA$,
    however. One specific choice is suggested by Theorem~\ref{thmequi}, in
    particular \eqref{ansatz2}. If
    \[
        \bfAhat\DEF\bfPB\bfA,\quad
        \bfvhat\DEF\bfrhat_0\DEF\bfPB\bfr_0=\bfPB(\bfb-\bfA\bfx_0)
        \quad \text{and}\quad
        \bfbhat\DEF\bfPB\bfb
    \]
    then \eqref{ansatz2} becomes
    \begin{align}
        \begin{aligned}
            \bfxhat_n &\in \bfx_0 + \calK_n(\bfAhat, \bfrhat_0) , \\
            \bfrhat_n &\DEF \bfbhat - \bfAhat \bfxhat_n \perp \bfB
            \calK_n(\bfAhat, \bfrhat_0),
        \end{aligned}
        \label{ansatz2_sugg}
    \end{align}
    which is a formal Galerkin condition for the (consistent and singular) {\em
    deflated system} $\bfAhat \bfxhat=\bfbhat$.  Based on the Jordan form of
    $\bfA$ we show in the following theorem how the Jordan form of
    $\bfAhat=\bfPB\bfA$ looks like when (1) $\calU$ is a right invariant
    subspace or (2) $\bfB\calU$ is a left invariant subspace of $\bfA$.

    \begin{theorem}
        \label{thm:spectrum}
        Suppose that the matrix $\bfA\in\CC^{N\times N}$ has a partitioned
        Jordan decomposition of the form
        \begin{equation}\label{jordanA}
            \bfA
            = \bfS \bfJ \bfS^{-1}
            = \Matrix{cc}{\bfS_1 & \bfS_2}
            \Matrix{cc}{\bfJ_1 & \bf0 \\ \bf0 & \bfJ_2}
            \Matrix{c}{\WH\bfS_1^\H \\ \WH\bfS_2^\H}\,,
        \end{equation}
        where $\bfS_1,\WH\bfS_1\in\CC^{N\times k}$,
        $\bfS_2,\WH\bfS_2\in\CC^{N\times (N-k)}$,
        $\bfJ_1\in\CC^{k\times k}$, and $\bfJ_2\in\CC^{(N-k)\times(N-k)}$.
        Then the following assertions hold:
        \begin{enumerate}
            \item[(1)] If $\calU=\Image(\bfS_1)$, $\bfU\in\CC^{N\times k}$ is
                any matrix satisfying $\Image(\bfU)= \calU$ and $\bfU^\H\bfB^\H
                \bfA\bfU$ is nonsingular, then
                \begin{align}
                    \label{jordanPA-right}
                    \bfAhat = \bfPB\bfA
                    = \Matrix{cc}{\bfU & \bfPB \bfS_2}
                    \Matrix{cc}{\bf0 & \bf0 \\ \bf0 & \bfJ_2}
                    \Matrix{cc}{\bfU & \bfPB \bfS_2}^{-1}\\
                    \text{with}\q
                    \Matrix{cc}{\bfU & \bfPB \bfS_2}^{-1} =
                    \Matrix{cc}{\bfB\bfU(\bfU^\H\bfB\bfU)^{-1} & \WH\bfS_2}^\H.
                    \notag
                \end{align}

            \item[(2)] If $\bfB\calU=\Image(\WH\bfS_1)$, $\bfU\in\CC^{N\times
                k}$ is any matrix satisfying $\Image(\bfU)=\calU$ and
                $\bfU^\H\bfB^\H \bfA\bfU$ is nonsingular, then
                \begin{align}
                    \label{jordanPA-left}
                    \bfAhat = \bfPB\bfA
                    = \Matrix{cc}{\bfU & \bfS_2}
                    \Matrix{cc}{\bf0 & \bf0 \\ \bf0 & \bfJ_2}
                    \Matrix{cc}{\bfU & \bfS_2}^{-1}\\
                    \text{with}\q
                    \Matrix{cc}{\bfU & \bfS_2}^{-1}=
                    \Matrix{cc}{\bfB\bfU(\bfU^\H\bfB\bfU)^{-1} &
                    \bfQB^\H\WH\bfS_2}^\H.
                    \notag
                \end{align}
        \end{enumerate}
        In particular, in both cases the spectrum $\Lambda(\bfAhat)$ of
        $\bfAhat$ is given by $\Lambda(\bfAhat)=\{0\}\cup\Lambda(\bfJ_2)$.
    \end{theorem}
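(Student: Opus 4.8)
The plan is to establish \eqref{jordanPA-right} and \eqref{jordanPA-left} by a direct block-by-block computation, using only the biorthogonality hidden in \eqref{jordanA} together with the projection identities of Lemma~\ref{lemproj}. Write $\bfS=\Matrix{cc}{\bfS_1 & \bfS_2}$ and $\WH\bfS=\Matrix{cc}{\WH\bfS_1 & \WH\bfS_2}$; since the third factor in \eqref{jordanA} is $\bfS^{-1}$, we have $\WH\bfS^\H\bfS=\bfI$, so that $\WH\bfS_i^\H\bfS_j=\delta_{ij}\bfI$, and \eqref{jordanA} further gives $\bfA\bfS_1=\bfS_1\bfJ_1$, $\bfA\bfS_2=\bfS_2\bfJ_2$ and $\WH\bfS_2^\H\bfA=\bfJ_2\WH\bfS_2^\H$. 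Nonsingularity of $\bfA$ makes $\bfJ_1,\bfJ_2$ nonsingular, so $0\notin\Lambda(\bfJ_2)$. In case~(1), $\Image(\bfU)=\Image(\bfS_1)$ gives $\bfU=\bfS_1\bfT$ with $\bfT\in\CC^{k\times k}$ invertible; in case~(2), $\Image(\bfB\bfU)=\Image(\WH\bfS_1)$ gives $\bfB\bfU=\WH\bfS_1\bfT$ with $\bfT$ invertible. (Inserting these into $\bfE_\bfB=\bfU^\H\bfB^\H\bfA\bfU$ and invoking the invariance relations also shows $\bfU^\H\bfB\bfU$ is nonsingular, so the inverse matrices in \eqref{jordanPA-right}--\eqref{jordanPA-left} are meaningful.) For each part I would verify two statements: (i) the exhibited matrix $\bfX$ --- namely $\Matrix{cc}{\bfU & \bfPB\bfS_2}$ in case~(1) and $\Matrix{cc}{\bfU & \bfS_2}$ in case~(2) --- is invertible with exactly the stated inverse, by checking $\bfX^{-1}\bfX=\bfI$ blockwise; and (ii) $\bfPB\bfA\,\bfX=\bfX\Matrix{cc}{\bf0 & \bf0 \\ \bf0 & \bfJ_2}$. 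Combining (i) and (ii) yields $\bfAhat=\bfPB\bfA=\bfX\Matrix{cc}{\bf0 & \bf0 \\ \bf0 & \bfJ_2}\bfX^{-1}$.

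In case~(1) the four blocks of $\bfX^{-1}\bfX$ reduce as follows. The $(1,1)$ block is $(\bfU^\H\bfB\bfU)^{-\H}\bfU^\H\bfB^\H\bfU=\bfI$; the $(1,2)$ block is a left multiple of $\bfU^\H\bfB^\H\bfPB\bfS_2$, hence $\bf0$ because $\bfU^\H\bfB^\H\bfPB=\bf0$ by part~(1) of Lemma~\ref{lemproj}; the $(2,1)$ block is $\WH\bfS_2^\H\bfU=\WH\bfS_2^\H\bfS_1\bfT=\bf0$; and the $(2,2)$ block is $\WH\bfS_2^\H\bfPB\bfS_2=\bfI-\WH\bfS_2^\H\bfA\bfMB\bfB^\H\bfS_2=\bfI$, since $\WH\bfS_2^\H\bfA\bfMB=\bfJ_2\WH\bfS_2^\H\bfU\bfE_\bfB^{-1}\bfU^\H=\bf0$. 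For (ii), $\bfPB\bfA\bfU=\bf0$ is again part~(1) of Lemma~\ref{lemproj}. For the remaining columns, $\calU=\Image(\bfS_1)$ is $\bfA$-invariant, so $\bfA\bfU=\bfU\bfW$ with $\bfW=\bfT^{-1}\bfJ_1\bfT$; hence $\bfA\bfMB\bfB^\H\bfS_2=\bfU\bfW\bfE_\bfB^{-1}\bfU^\H\bfB^\H\bfS_2\in\Image(\bfU)$, and $\bfPB\bfA\,(\bfPB\bfS_2)=\bfPB\bfA\bfS_2-\bfPB\bfA\bfU\bfW\bfE_\bfB^{-1}\bfU^\H\bfB^\H\bfS_2=\bfPB\bfA\bfS_2=\bfPB\bfS_2\bfJ_2$ by $\bfA\bfS_2=\bfS_2\bfJ_2$. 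Thus $\bfPB\bfA\Matrix{cc}{\bfU & \bfPB\bfS_2}=\Matrix{cc}{\bf0 & \bfPB\bfS_2\bfJ_2}=\Matrix{cc}{\bfU & \bfPB\bfS_2}\Matrix{cc}{\bf0 & \bf0 \\ \bf0 & \bfJ_2}$.

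In case~(2) the decisive fact is $(\bfB\bfU)^\H\bfS_2=\bfT^\H\WH\bfS_1^\H\bfS_2=\bf0$. Together with $\bfA\bfS_2=\bfS_2\bfJ_2$ and the definitions \eqref{QPP}, this forces $\bfMB\bfB^\H\bfA\bfS_2=\bfU\bfE_\bfB^{-1}(\bfB\bfU)^\H\bfS_2\bfJ_2=\bf0$, so $\bfQB\bfS_2=\bfS_2$, and likewise $\bfA\bfMB\bfB^\H\bfS_2=\bfA\bfU\bfE_\bfB^{-1}(\bfB\bfU)^\H\bfS_2=\bf0$, so $\bfPB\bfS_2=\bfS_2$. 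The four blocks of $\bfX^{-1}\bfX$ are then $\bfI$, $(\bfU^\H\bfB\bfU)^{-\H}(\bfB\bfU)^\H\bfS_2=\bf0$, $\WH\bfS_2^\H\bfQB\bfU=\bf0$ (part~(2) of Lemma~\ref{lemproj}), and $\WH\bfS_2^\H\bfQB\bfS_2=\WH\bfS_2^\H\bfS_2=\bfI$; and for (ii), $\bfPB\bfA\bfU=\bf0$ (part~(1) of Lemma~\ref{lemproj}) while $\bfPB\bfA\bfS_2=\bfPB\bfS_2\bfJ_2=\bfS_2\bfJ_2$, so $\bfPB\bfA\Matrix{cc}{\bfU & \bfS_2}=\Matrix{cc}{\bf0 & \bfS_2\bfJ_2}=\Matrix{cc}{\bfU & \bfS_2}\Matrix{cc}{\bf0 & \bf0 \\ \bf0 & \bfJ_2}$. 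In both cases the middle factor $\Matrix{cc}{\bf0 & \bf0 \\ \bf0 & \bfJ_2}$ is already in Jordan form --- its leading block is the $k\times k$ zero matrix and $\bfJ_2$ is a principal block of the Jordan matrix $\bfJ$ --- so the displayed equality is a Jordan decomposition of $\bfAhat$, and reading off the diagonal gives $\Lambda(\bfAhat)=\{0\}\cup\Lambda(\bfJ_2)$.

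The single step I expect to require a genuine idea is (ii) in case~(1): there $\bfPB\bfA\,(\bfPB\bfS_2)$ is not literally $\bfPB\bfA\bfS_2$, and the resolution is that although $\bfPB$ itself does \emph{not} annihilate $\calU$, the operator $\bfPB\bfA$ does, so the $\bfA$-invariance of $\calU=\Image(\bfS_1)$ routes the correction term $\bfA\bfMB\bfB^\H\bfS_2$ back into $\Image(\bfU)$ where $\bfPB\bfA$ kills it. In case~(2) the analogous term is zero outright because $\bfB\calU=\Image(\WH\bfS_1)$ is orthogonal to $\Image(\bfS_2)$ by the biorthogonality of the Jordan decomposition, which is also why $\bfS_2$ itself --- rather than $\bfPB\bfS_2$ --- appears as the eigenvector matrix there; everything else is routine block arithmetic.
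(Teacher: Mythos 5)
Your proposal is correct and takes essentially the same route as the paper's proof: a direct blockwise verification of $\bfPB\bfA\,\bfX=\bfX\,\mathsf{blockdiag}(\bf0,\bfJ_2)$ together with a check that the displayed matrix is a genuine inverse, using the projection identities of Lemma~\ref{lemproj}, the $\bfA$-invariance of $\calU$ (to route the correction term $\bfA\bfMB\bfB^\H\bfS_2$ into $\Image(\bfU)$ where $\bfPB\bfA$ annihilates it), and the biorthogonality $\WH\bfS_2^\H\bfS_1=\bf0$. The only difference is cosmetic: the paper simplifies $\bfPB$ to $\bfI-\bfU(\bfU^\H\bfB^\H\bfU)^{-1}\bfU^\H\bfB^\H$ before computing and leaves part (2) as "analogous," whereas you carry the correction term explicitly and write out part (2) in full.
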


    \begin{proof}
        (1) From Lemma~\ref{lemproj} we can see that
        \[
            \bfPB\bfA\bfU=\bf0
            \q\text{and}\q
            \left(\bfB\bfU(\bfU^\H\bfB\bfU)^{-1}\right)^\H\bfPB\bfA=\bf0.
        \]
        By construction, there exists a nonsingular matrix $\bfR\in\CC^{k\times
        k}$ with $\bfA\bfU=\bfU\bfR$. Hence $\bfPB=\bfI - \bfU
        (\bfU^\H\bfB^\H\bfU)^{-1}\bfU^\H\bfB^\H$ and from
        $\WH\bfS_2^\H\bfS_1=\bf0$ we conclude that $\WH\bfS_2^\H\bfU=\bf0$ and
        thus
        \[
            \WH\bfS_2^\H\bfPB\bfA=\WH\bfS_2^\H\bfA=\bfJ_2\WH\bfS_2^\H.
        \]
        Furthermore,
        \[
            \bfPB\bfA (\bfPB\bfS_2) =
            \bfPB \bfA\bfS_2 -
            \bfPB\bfA\bfU(\bfU^\H\bfB^\H\bfU)^{-1}\bfU^\H\bfB^\H\bfS_2=
            (\bfPB\bfS_2)\bfJ_2
        \]
        and the proof of (1) is complete after recognizing that
        \[
            \Matrix{c}{ (\bfU^\H\bfB^\H\bfU)^{-1}\bfU^\H\bfB^\H \\
            \WH\bfS_2^\H}
            \Matrix{cc}{\bfU & \bfPB \bfS_2} = \Matrix{cc}{\bfI & \bf0\\
            \bf0 & \bfI}.
        \]
        The proof of (2) is analogous to (1).
    \end{proof}

    Results like the previous theorem motivate the term ``deflation'', which
    means ``making something smaller'', since the multiplication with the
    operator $\bfPB$ ``removes'' certain eigenvalues from the operator $\bfA$ by
    ``moving them to zero''.  Special cases of the results shown in
    Theorem~\ref{thm:spectrum} have appeared in the literature: in particular,
    for spd matrices $\bfA$ and $\bfB=\bfI$ in the
    works of Frank and Vuik~\cite{FraV01} and Nabben and
    Vuik~\cite{NabV04,NabV06}, and for nonsymmetric $\bfA$ and $\bfB=\bfI$ in
    the articles by Erlangga and Nabben~\cite{ErlN08,ErlN08a} and Yeung, Tang
    and Vuik~\cite{YeuTV10}.

	\section{Hermitian positive definite matrices and \CG}
	\label{secHpdA}

    This section presents some well-known results for deflation and augmentation
    techniques within the framework described in Section~\ref{secdeflaug} in the
    case where $\bfA$ is Hpd.  The first proposed deflated Krylov subspace
    methods for Hpd matrices are the deflated \CG{} variants of
    Nicolaides~\cite{Nic87} and Dost\'{a}l~\cite{Dos88}.  With a full-rank
    matrix $\bfU\in\CC^{N\times k}$, $\calU=\Image(\bfU)$ and $\bfB=\bfI$ both
    essentially apply the \CG{} method to the {\em deflated system}
	\begin{equation}\label{PIAxPIb}
        \bfAhat \bfxhat=\bfbhat,\quad\mbox{where}\quad
        \bfAhat \DEF \bfPI\bfA,\quad \bfbhat\DEF\bfPI \bfb.
	\end{equation}
    Here, $\bfPI=\bfI -\bfA \bfU(\bfU^\H\bfA\bfU)^{-1}\bfU^\H$ is the projection
    onto $\calU^\perp$ along $\bfA\calU$ as defined in \eqref{QPP} when $\bfB =
    \bfI$.  Moreover, $\bfQI = \bfPI^\H$ is then the projection onto
    $(\bfA\calU)^\perp$ along $\calU$.  Note that all the matrices in
    \eqref{QPP} are well defined because $\bfE_\bfI=\bfU^\H\bfA\bfU$ is Hpd if
    $\bfA$ is Hpd.  Clearly, the deflated matrix $\bfAhat$ is Hermitian but
    singular, since $\bfPI$ is a nontrivial projection if $0<k<N$.  In fact,
    this matrix $\bfAhat$ is positive semi-definite, since
	\begin{eqnarray*}
		\bfv^\H \bfAhat \bfv &=& \bfv^\H\bfPI \bfA\bfv = \bfv^\H\bfPI^2 \bfA\bfv=
		\bfv^\H\bfPI (\bfPI\bfA)\bfv= \bfv^\H\bfPI (\bfPI\bfA)^\H\bfv
		= \bfv^\H\bfPI \bfA\bfPI^\H\bfv\geq 0
	\end{eqnarray*}
    holds for any $\bfv \in \CC^N$.  The system \eqref{PIAxPIb} is consistent
    since it results from a left-multiplication of the nonsingular system
    $\bfA\bf x=\bfb$ by $\bfPI$.  (We note that in \cite{Nic87,Dos88} the
    application of the projection $\bfPI$ to $\bfb$ is carried out implicitly by
    adapting the initial guess such that the initial residual is orthogonal to
    $\calU=\Image(\bfU)$.) The solution $\bfx$ of $\bfA\bfx=\bfb$ thus also
    solves $\bfAhat\bfx=\bfbhat$, but in Eqn.~\eqref{PIAxPIb} we replaced $\bfx$
    by $\bfxhat$ to indicate the non-uniqueness of the solution.  In fact, the
    general solution is $\bfxhat = \bfx + \bfh$ with $\bfh \in \calU$ since
    $\bfPI \bfA \bfh = \bfA \bfQI \bfh = \bf0$ if and only if $\bfQI \bfh =
    \bf0$, that is, $\bfh \in \calU$; see statement~\ref{lemprojPtU} of
    Lemma~\ref{lemproj}.  The application of $\bfQI$ in the final correction
    \eqref{xnP'} will annihilate $\bfh$.
    Note that a deflation version including the final
    correction~\eqref{xnP'} is used by Frank and Vuik~\cite{FraV01} and Nabben
    and Vuik~\cite{NabV04,NabV06}.

    In the context of Hpd matrices the application of the \CG{} method to a
    deflated system like \eqref{PIAxPIb} is a commonly used technique; see,
    e.g.,~\cite{TanNVE09} for a survey of results.  Finally, we point out that
    $\bfAhat$ as defined in \eqref{PIAxPIb} is completely determined by $\bfA$
    and the choice of the space $\calU$.

    According to Nicolaides~\cite[Section~3]{Nic87} and
    Kaasschieter~\cite[Section~2]{Kaa88}, the \CG{} method is well defined
    (in exact arithmetic) for each step $n$ until it terminates with an exact
    solution, when it is applied to a consistent linear algebraic system with a
    real and symmetric positive semidefinite matrix.  This result easily
    generalizes to complex and Hermitian positive semidefinite matrices.

    Mathematically, the $n$-th step of the \CG{} method applied to the deflated
    system (\ref{PIAxPIb}) with the initial guess $\bfx_0$ and the corresponding
    initial residual $\bfrhat_0=\bfbhat-\bfAhat\bf x_0$ is characterized by the
    two conditions
	\begin{align*}
		\bfxhat_n &\in \bfx_0 + \calK_n(\bfAhat, \bfrhat_0) , \\
		\bfrhat_n &= \bfbhat - \bfAhat \bfxhat_n
		= \bfPI (\bfb - \bfA \bfxhat_n) \perp \calK_n(\bfAhat, \bfrhat_0),
	\end{align*}
    This is nothing but the set of conditions (\ref{ansatz2}) in
    Theorem~\ref{thmequi} with $\bfB=\bfI$.  In the sense of
    relation~\eqref{xnP'} these conditions have been shown to be equivalent to
    \eqref{ansatz1}, namely
	$$\begin{aligned}
		\bfx_n &\in \bfx_0 + \calK_n(\bfAhat, \bfrhat_0) + \calU , \\
		\bfr_n & = \bfb - \bfA \bfx_n \perp \calK_n(\bfAhat, \bfrhat_0) + \calU,
	\end{aligned}$$
    which is the starting point of the theory for the deflated \CG{} method
    developed in~\cite{SaaYEG00}, where the authors also showed the equivalence
    between \CG{} with explicit augmentation and \CG{} applied to the deflated
    system \eqref{PIAxPIb}; see Section~4 in \cite{SaaYEG00}, in particular
    Theorem 4.6.  In a partly similar treatment, Erhel and Guyomarc'h
    \cite{ErhG00} considered an augmented and deflated CG method where the
    augmentation space $\calU$ is itself a Krylov space.  It is worth mentioning
    that both Saad \ETAL \cite[Eqn.~(3.12)]{SaaYEG00} and Erhel and Guyomarc'h
    \cite[Eqn.~(3.2)]{ErhG00} use the initial correction
    \[
       \bfx_0 := \bfx_{-1} + \bfMI \bfr_{-1} \qq\text{with}\q \bfr_{-1} := \bfb - \bfA \bfx_{-1}
    \]
    to replace a given initial approximation $\bfx_{-1}$ by one with $\bfr_0
    \perp \calU$; in fact, it is easily seen that $\bfr_0 = \bfPI \bfr_{-1}$.

    The goal of deflation is to obtain a deflated matrix $\bfAhat$ whose
    ``effective condition number'' is smaller than the one of $\bfA$, for
    example by ``eliminating'' the smallest eigenvalues of $\bfA$.  A detailed
    analysis of spectral properties of $\bfPI \bfA$ and other projection-type
    preconditioners arising from domain decomposition and multigrid methods was
    carried out in~\cite{NabV06} and~\cite{TanNVE09}.  In particular, it was
    shown in these papers that the effective condition number of $\bfAhat$ is
    less than or equal to the condition number of $\bfA$ for {\em any}
    augmentation space $\calU$.  Moreover, if $\Lambda=\Lambda(\bfA)$ is the
    spectrum of $\bfA$ and $\calU$ is an $\bfA$-invariant subspace associated
    with the eigenvalues $\Theta=\{\theta_1,\ldots,\theta_k\}\subset\Lambda$,
    then the effective 2-norm condition number is
    \[
      \kappa_2(\bfAhat)=\frac{\max_{\lambda\in\Lambda\setminus\Theta}{\lambda}}
      {\min_{\lambda\in\Lambda\setminus\Theta}{\lambda}}.
    \]

    In summary, for {\em any} augmentation space $\calU$, the \CG{} method
    applied to the (singular) deflated system (\ref{PIAxPIb}) is well defined
    for any iteration step $n$, and it terminates with an exact solution
    $\bfxhat$ (in exact arithmetic). Once \CG{} has terminated with a solution
    $\bfxhat$ of the deflated system, we can obtain the uniquely defined
    solution of the original system using the final correction step
	$$\bfx = \bfQI \bfxhat + \bfMI \bfb$$
	(cf.\ \eqref{xnP'}), which indeed gives
	$$
        \bfA \bfx=\bfA\bfQI \bfxhat
    	+\bfA \bfMI \bfb=
    	{\bf P}_\bfI \bfA \bfxhat + \bfA \bfMI \bfb=
    	({\bf P}_\bfI+\bfA \bfMI) \bfb=\bfb.
    $$
    This computation is mathematically equivalent to an explicit use of
    augmentation.  Of course, in practice we stop the \CG{} iteration for the
    deflated system once the solution is approximated sufficiently accurately.
    We then use the computed approximation $\bfxhat_n$ and equation~\eqref{xnP'}
    from Theorem~\ref{thmequi} to obtain an approximation $\bfx_n$ of the
    solution of the given system $\bfA\bfx=\bfb$.  Note that, according to
    \eqref{xnP'}, the residual $\bfrhat_n = \bfbhat - \bfAhat \bfxhat_n$ of the
    projected system \eqref{PIAxPIb} is equal to the residual $\bfr_n = \bfb -
    \bfA \bfx_n$ of the original system \eqref{Axb}.

	\section{Non-Hermitian matrices and \GMRES}
	\label{secgeneralA}

    In this section we present mostly known results on applying versions of
    deflated \GMRES\ to a general nonsingular matrix $\bfA$.  We set $\bfB=\bfA$
    in the framework of Section~\ref{secdeflaug} and discuss some choices for
    $\bfAhat$, $\bfvhat$ and $\calU$.

    Morgan~\cite{Mor95} and also Chapman and Saad~\cite{ChaS97} presented
    variations of \GMRES{} that can be mathematically described by
    \eqref{ansatz1} with $\bfAhat=\bfA$ and $\bfvhat=\bfb-\bfA\bfx_0$. Hence
    they augmented the search space with an augmentation space $\calU$ but did
    neither deflate the matrix nor project the linear system onto a subspace of
    $\CC^N$.

    Erlangga and Nabben~\cite{ErlN08} used two matrices $\bfY,
    \bfZ\in\CC^{N\times k}$ to define the abstract deflation operator
    $\bfP_{\bfY\bfZ}\DEF\bfI - \bfA\bfZ(\bfY^\H\bfA\bfZ)^{-1}\bfY^\H$ for
    non-Hermitian matrices $\bfA$. Of course, this choice needs the assumption
    of nonsingularity of $\bfY^\H \bfA \bfZ$. Requiring $\bfY$ and $\bfZ$ to
    have full rank obviously is not sufficient. They then applied \GMRES{} to
    the deflated linear system $\bfP_{\bfY\bfZ}\bfA\bfxhat =
    \bfP_{\bfY\bfZ}\bfb$.

    De Sturler~\cite{Stu96} introduced the \GCRO{} method, which is a nested
    Krylov subspace method involving an outer and an inner iteration.  The outer
    method is the \GCR{} method \cite{EisES83,Elm82}, while the inner iteration
    uses the projection
	\[
        \bfPA=\bfI-\bfA\bfMA\bfA^\H=\bfI-\bfA
        \bfU(\bfU^\H\bfA^\H\bfA\bfU)^{-1}\bfU^\H\bfA^\H
    \]
    to apply several steps of \GMRES{} to the projected (or deflated) linear
    system
	\begin{equation}\label{AhxbhGM}
		\bfAhat \bfxhat =\bfbhat,\quad\mbox{where}\quad
		\bfAhat \ASS \bfPA \bfA,\quad \bfbhat \ASS \bfPA \bfb.
	\end{equation}
    In \GCRO{} the matrix $\bfU$ is determined from the corrections of the outer
    iteration.  Clearly, the matrix $\bfE_\bfA = \bfU^\H \bfA^\H \bfA \bfU$ is
    nonsingular for any matrix $\bfU\in\CC^{N\times k}$ with $\rank\bfU=k>0$, so
    that all matrices in \eqref{QPP} are well defined.  Note that the projection
    $\bfPA$ is equal to the abstract deflation operator $\bfP_{\bfY\bfZ}$
    of Erlangga and Nabben with the choice $\bfZ=\bfU$ and $\bfY=\bfA\bfU$.  For
    the application of $\bfPA$ only the matrix $\bfW\DEF\bfA\bfU$ is needed
    because $\bfPA=\bfI - \bfW(\bfW^\H\bfW)^{-1}\bfW^\H$.  De Sturler further
    simplified this in \cite[Section~2]{Stu96} to $\bfPA=\bfI-\bfC\bfC^\H$ by
    choosing a matrix $\bfC\in\CC^{N\times k}$ whose columns form an orthonormal
    basis of $\Image(\bfA\bfU$).

    Here we concentrate on the \GMRES{} method applied to the deflated system
    \eqref{AhxbhGM} and we first discuss some known results within the framework
    presented in Section~\ref{secdeflaug}.  Analogously to the approach for
    \CG{} described in the previous section, the deflated system \eqref{AhxbhGM}
    results from the given system $\bfA\bfx=\bfb$ by a left-multiplication with
    $\bfPA$ which projects onto $(\bfA\calU)^\perp$ along $\bfA\calU$.  Note
    that $\bfPA$ is an orthogonal projection, since $\bfPA$ is Hermitian.

    If we start \GMRES{} with an initial guess $\bfx_0$ and the corresponding
    initial residual $\bfrhat_0=\bfbhat-\bfAhat\bfx_0=\bfPA (\bfb-\bfA\bfx_0)$,
    then the iterate $\bfxhat_n$ and the residual $\bfrhat_n$ are characterized
    by the two conditions
	\begin{align*}
		\bfxhat_n \in \bfx_0 + \calK_n(\bfAhat, \bfrhat_0), \quad\mbox{and}\quad
		\bfrhat_n = \bfbhat - \bfAhat \bfxhat_n \perp \bfAhat \calK_n(\bfAhat, \bfrhat_0).
	\end{align*}
    If the columns of $\bfV_n$ form a basis of $\calK_n(\bfAhat, \bfrhat_0)$,
    then the second condition means that
	\begin{eqnarray*}
		\bf0 & = & \bfV_n^\H \bfAhat^\H \bfrhat_n = \bfV_n^\H \bfA^\H \bfPA^\H \bfrhat_n
		= \bfV_n^\H \bfA^\H \bfPA \bfPA \left( \bfb - \bfA \bfxhat_n \right) =
		\bfV_n^\H \bfA^\H \bfPA \left( \bfb - \bfA \bfxhat_n \right)\\
		&=& \bfV_n^\H \bfA^\H \bfrhat_n,
	\end{eqnarray*}
	or, equivalently,
	$$ \bfrhat_n \perp \bfA \calK_n(\bfAhat, \bfrhat_0).$$
    Note that here the Krylov subspace is multiplied with $\bfA$ instead of
    $\bfAhat$ and that this condition has precisely the form of the second
    condition in \eqref{ansatz2}.  Theorem~\ref{thmequi} now implies that the
    mathematical characterization of \GMRES{} applied to the deflated system
    $\bfAhat \bfxhat =\bfbhat$ is equivalent to the explicit use of
    augmentation, i.e., the conditions
	\begin{align}
		\bfx_n &\in \bfx_0 + \calK_n(\bfAhat, \bfrhat_0) + \calU \label{xnKnU} , \\
		\bfr_n &= \bfb - \bfA \bfx_n \perp \bfA \calK_n(\bfAhat, \bfrhat_0) + \bfA \calU, \label{rnAKnAU}
	\end{align}
	in the sense that
	\begin{equation}\label{xnP"}
		\bfx_n = \bfQA \bfxhat_n + \bfMA \bfA^\H \bfb ,\quad\mbox{and}\quad
		\bfr_n = \bfb -\bfA \bfx_n = \bfbhat - \bfAhat \bfxhat_n = \bfrhat_n.
	\end{equation}
    As mentioned in the beginning of Section~\ref{secdeflaug}, conditions
    \eqref{xnKnU}-\eqref{rnAKnAU} are equivalent to the minimization problem
    \begin{align*}
        \text{find}\quad \bfx_n\in\bfx_0+\calS_n \quad\text{s.t.}\quad
        \|\bfb-\bfA\bfx_n\|_2
        =\min_{\bfy\in\bfx_0+\calS_n }
        \|\bfb-\bfA\bfy\|_2
    \end{align*}
    with the search space $\calS_n=\calK_n(\bfAhat, \bfrhat_0) + \calU$. In the
    setting of \GCRO{}, where $\bfU$ is determined from the \GCR{} iteration,
    the equivalence between \GMRES{} applied to $\bfAhat\bfxhat=\bfbhat$ and the
    minimization problem with an explicitly augmented search space has already
    been pointed out by de Sturler~\cite[Theorem~2.2]{Stu96}.  The \GCRO{}
    method was extended to an arbitrary rank-$k$ matrix $\bfU$
    in~\cite[Section~2]{KilS06}.  In the case where $\calU$ is an
    $\bfA$-invariant subspace the equivalence is straightforward and has been
    pointed out by Eiermann, Ernst and Schneider~\cite[Lemma~4.3]{EieES00}.

    Again the deflated matrix $\bfAhat$ is singular, and we have to discuss
    whether the application of \GMRES{} to the deflated system yields (in exact
    arithmetic) a well-defined sequence of iterates that terminates with a
    solution. This turns out to be significantly more difficult than in the case
    of the \CG{} method. Properties of \GMRES{} applied to singular systems have
    been analyzed by de Sturler \cite{Stu96} and by Brown and
    Walker~\cite{BroW97}.  The following result is an extension of
    \cite[Theorem~2.6]{BroW97}.

	\begin{theorem}\label{thmsinggmres}
        Consider an arbitrary matrix $\bfAhat \in \CC^{N \times N}$ and a vector
        $\bfbhat \in \Image(\bfAhat)$ $($i.e., the linear algebraic system
        $\bfAhat \bfxhat = \bfbhat$ is consistent$)$.  Then the following two
        conditions are equivalent:
		\begin{enumerate}
            \item For every initial guess $\bfx_0 \in \CC^N$ the \GMRES{} method
                applied to the system $\bfAhat \bfxhat = \bfbhat$ is well defined
                at each iteration step $n$ and it terminates with a
                solution of the system.
            \item $\Ker (\bfAhat) \cap \Image (\bfAhat) = \left\{ \bf0
                \right\}$.
		\end{enumerate}
	\end{theorem}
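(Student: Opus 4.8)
The plan is to reduce both implications to a single observation about where \GMRES{} can break down, and then to dispatch the two directions with short linear-algebra arguments. First I would fix an initial guess $\bfx_0\in\CC^N$, set $\bfrhat_0\DEF\bfbhat-\bfAhat\bfx_0$, and let $d=d(\bfx_0)$ be the grade of $\bfrhat_0$ with respect to $\bfAhat$, \IE{} the smallest index with $\calK_{d+1}(\bfAhat,\bfrhat_0)=\calK_d(\bfAhat,\bfrhat_0)$, so that $\calK_d(\bfAhat,\bfrhat_0)$ is $\bfAhat$-invariant of dimension~$d$. The $n$-th \GMRES{} iterate $\bfxhat_n$ is the minimizer of $\nrmeuc{\bfbhat-\bfAhat\bfy}$ over $\bfy\in\bfx_0+\calK_n(\bfAhat,\bfrhat_0)$, and it is well defined (uniquely determined) exactly when that least-squares problem has a unique minimizer, equivalently when $\bfAhat$ is injective on $\calK_n(\bfAhat,\bfrhat_0)$, \IE{} $\Ker(\bfAhat)\cap\calK_n(\bfAhat,\bfrhat_0)=\{\bf0\}$. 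A short polynomial argument (using that the minimal polynomial of $\bfrhat_0$ with respect to $\bfAhat$ has degree~$d$) shows that this intersection is automatically trivial for every $n<d$; hence the only step at which a breakdown can occur is step~$d$, where $\bfAhat$ maps the $d$-dimensional invariant space $\calK_d(\bfAhat,\bfrhat_0)$ into itself, and there $\bfAhat$ is injective on $\calK_d(\bfAhat,\bfrhat_0)$ if and only if $0$ is not a root of that minimal polynomial. Consequently condition~(1) of the theorem is equivalent to demanding $\Ker(\bfAhat)\cap\calK_{d(\bfx_0)}(\bfAhat,\bfbhat-\bfAhat\bfx_0)=\{\bf0\}$ for \emph{every} $\bfx_0\in\CC^N$. (Alternatively, this reduction can be run through the Arnoldi relation $\bfAhat\bfV_n=\bfV_{n+1}\Hne$: the unreduced Hessenberg matrix $\Hne$ has full column rank for $n<d$, while its square analogue at step~$d$ is singular exactly when the above intersection is nontrivial.)

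For the implication (2)$\Rightarrow$(1), I would assume $\Ker(\bfAhat)\cap\Image(\bfAhat)=\{\bf0\}$ and fix any $\bfx_0$. By consistency there is a $\bfu$ with $\bfAhat\bfu=\bfbhat$, so $\bfrhat_0=\bfAhat(\bfu-\bfx_0)\in\Image(\bfAhat)$; since $\Image(\bfAhat)$ is $\bfAhat$-invariant it contains $\calK_d(\bfAhat,\bfrhat_0)$, whence $\Ker(\bfAhat)\cap\calK_d(\bfAhat,\bfrhat_0)\subseteq\Ker(\bfAhat)\cap\Image(\bfAhat)=\{\bf0\}$. By the reduction above, \GMRES{} is then well defined at every step; moreover $\bfAhat$ acts bijectively on the invariant space $\calK_d(\bfAhat,\bfrhat_0)$, which contains $\bfrhat_0$, so $\bfAhat\bfz=\bfrhat_0$ has a solution $\bfz\in\calK_d(\bfAhat,\bfrhat_0)$ and $\bfxhat_d\DEF\bfx_0+\bfz$ has zero residual. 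Hence \GMRES{} terminates with a solution.

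For (1)$\Rightarrow$(2) I would argue by contraposition. Suppose $\bfw\in\Ker(\bfAhat)\cap\Image(\bfAhat)$ with $\bfw\neq\bf0$; choose $\bfv$ with $\bfAhat\bfv=\bfw$ (possible since $\bfw\in\Image(\bfAhat)$) and $\bfu$ with $\bfAhat\bfu=\bfbhat$ (possible by consistency), and set $\bfx_0\DEF\bfu-\bfv$. Then $\bfrhat_0=\bfbhat-\bfAhat\bfx_0=\bfAhat\bfv=\bfw\in\Ker(\bfAhat)$, so $\calK_n(\bfAhat,\bfrhat_0)=\Span\{\bfw\}$ for every $n\ge1$ and $\bfAhat$ is not injective on $\calK_1(\bfAhat,\bfrhat_0)$; thus \GMRES{} fails to be well defined already at step~$1$ for this $\bfx_0$ (its residual moreover stays equal to $\bfw\neq\bf0$, so it never terminates with a solution), contradicting condition~(1).

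The step I expect to be the main obstacle is the bookkeeping in the first paragraph: one must check carefully that for a consistent, possibly singular, system a \GMRES{} breakdown can occur only at the step at which the Krylov subspace becomes $\bfAhat$-invariant, and that at that step ``well defined and terminating with a solution'' is equivalent to injectivity of $\bfAhat$ on that subspace. This is exactly the point at which the statement strengthens \cite[Theorem~2.6]{BroW97}, where one only obtains that \GMRES{} produces a least-squares solution; consistency upgrades that conclusion to termination with an exact solution under the same spectral condition. Once the reduction is in hand, both implications are immediate, the only trick being the choice $\bfx_0=\bfu-\bfv$ that forces $\bfrhat_0$ into $\Ker(\bfAhat)$.
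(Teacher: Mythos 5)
Your proposal is correct, and the implication (1)$\Rightarrow$(2) is argued exactly as in the paper: by contraposition, picking $\bfw\in\Ker(\bfAhat)\cap\Image(\bfAhat)\setminus\{\bf0\}$, writing $\bfw=\bfAhat\bfv$ and $\bfbhat=\bfAhat\bfu$, and taking $\bfx_0=\bfu-\bfv$ so that $\bfrhat_0=\bfw\in\Ker(\bfAhat)$ and \GMRES{} stalls at the first step with a nonzero residual. Where you genuinely diverge is the implication (2)$\Rightarrow$(1): the paper simply cites \cite[Theorem~2.6]{BroW97} for this half, whereas you prove it from scratch via the grade of $\bfrhat_0$, showing that a breakdown can only occur at the step $d$ where $\calK_d(\bfAhat,\bfrhat_0)$ becomes $\bfAhat$-invariant, that injectivity of $\bfAhat$ on $\calK_n$ is automatic for $n<d$ by the minimal-polynomial argument, and that consistency places $\calK_d(\bfAhat,\bfrhat_0)$ inside $\Image(\bfAhat)$ so that condition (2) forces $\bfAhat$ to act bijectively on $\calK_d$ and hence to deliver a zero residual at step $d$. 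Your reduction is sound (in particular the observation that $\bfrhat_0\in\bfAhat\calK_d$ already implies injectivity of $\bfAhat$ on $\calK_d$, so non-injectivity at step $d$ precludes termination with a solution), and it buys a self-contained proof that also makes explicit why consistency upgrades the Brown--Walker conclusion from ``least-squares solution'' to ``exact solution''; the paper's route is shorter but leaves that half of the equivalence resting on the external reference.
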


	\begin{proof}
        It has been shown in~\cite[Theorem~2.6]{BroW97} that condition 2 implies
        condition 1.  We prove the reverse by contradiction. We assume that
        $\Ker (\bfAhat) \cap \Image(\bfAhat) \neq \left\{ \bf0 \right\}$, and we
        will construct an initial guess for which \GMRES{} does not terminate
        with the solution. For a nonzero vector $\bfy \in \Ker (\bfAhat) \cap
        \Image(\bfAhat)$ there exists a nonzero vector $\bfyhat \in \CC^N$, such
        that $\bfy=\bfAhat \bfyhat$, and since $\bfAhat \bfxhat = \bfbhat$ is
        consistent, there exists a vector $\bfxhat \in \CC^N$ with $\bfbhat =
        \bfAhat \bfxhat$.  Then the initial guess $\bfx_0 \ASS \bfxhat -
        \bfyhat$ gives $\bfr_0 = \bfbhat - \bfAhat \bfx_0 = \bfbhat - \bfAhat
        \bfxhat + \bfAhat \bfyhat = \bfy$.  But since $\bfy \in \Ker(\bfAhat)$,
        we obtain $\bfAhat \bfr_0 = \bf0$, so that the \GMRES{} method
        terminates at the first iteration with the approximation $\bfx_0$, for
        which $\bfr_0 = \bfy \neq \bf0$. Thus, for this particular initial guess
        $\bfx_0$ the \GMRES{} method cannot determine the solution of $\bfAhat
        \bfxhat = \bfbhat$.
	\end{proof}

    The situation that the \GMRES{} method terminates without finding the exact
    solution is often called a {\em breakdown} of \GMRES{}. The above proof
    leads to the following characterization of all initial guesses that lead to
    a breakdown of \GMRES{} at the first iteration.

    \begin{corollary}
        \label{corx0break}
        Let $\bfAhat\in\CC^{N\times N}$ and $\bfxhat,\bfbhat\in\CC^{N\times N}$
        such that $\bfAhat\bfxhat=\bfbhat$. Then the \GMRES{} method breaks down
        at the first iteration for all initial guesses
        \[
            \bfx_0\in\calX_0\DEF\big\{\bfxhat-\bfyhat ~\big|~
            \bfAhat\bfyhat\in\Ker(\bfAhat)\setminus\{\bf0\}\big\}.
        \]
    \end{corollary}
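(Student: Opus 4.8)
The plan is to read off the statement directly from the proof of Theorem~\ref{thmsinggmres}, which already isolates exactly the mechanism that causes a first-iteration breakdown. Recall that \GMRES{} ``breaks down at the first iteration'' means it produces the approximation $\bfx_0$ itself (equivalently, the first Krylov step fails to enlarge the search space) while $\bfr_0\neq\bf0$; as noted in the proof above, this happens precisely when $\bfAhat\bfr_0=\bf0$ with $\bfr_0\neq\bf0$, i.e.\ when $\bf0\neq\bfr_0\in\Ker(\bfAhat)$. So the whole task is to characterize those initial guesses $\bfx_0$ for which the initial residual $\bfr_0=\bfbhat-\bfAhat\bfx_0$ is a nonzero element of $\Ker(\bfAhat)$.

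First I would fix a particular solution $\bfxhat$ with $\bfAhat\bfxhat=\bfbhat$ (which exists by hypothesis) and write an arbitrary initial guess as $\bfx_0=\bfxhat-\bfyhat$ for some $\bfyhat\in\CC^N$; this parametrization is a bijection between initial guesses and vectors $\bfyhat$. Then $\bfr_0=\bfbhat-\bfAhat\bfx_0=\bfAhat\bfxhat-\bfAhat(\bfxhat-\bfyhat)=\bfAhat\bfyhat$. Hence the breakdown condition ``$\bf0\neq\bfr_0\in\Ker(\bfAhat)$'' translates verbatim into ``$\bfAhat\bfyhat\in\Ker(\bfAhat)\setminus\{\bf0\}$'', which is exactly the defining condition of the set $\calX_0$ in the corollary. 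This is the entire argument: substitute, simplify, and match.

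I would present it as: let $\bfx_0$ be any initial guess and set $\bfyhat\DEF\bfxhat-\bfx_0$, so $\bfr_0=\bfAhat\bfyhat$ as computed above; \GMRES{} breaks down at the first iteration if and only if $\bfr_0\neq\bf0$ and $\bfAhat\bfr_0=\bf0$, i.e.\ $\bfr_0\in\Ker(\bfAhat)\setminus\{\bf0\}$, i.e.\ $\bfAhat\bfyhat\in\Ker(\bfAhat)\setminus\{\bf0\}$, which says precisely $\bfx_0\in\calX_0$. Conversely every $\bfx_0\in\calX_0$ arises this way, so the two descriptions coincide. One small point worth spelling out explicitly is \emph{why} $\bfr_0\in\Ker(\bfAhat)\setminus\{\bf0\}$ is equivalent to first-step breakdown: if $\bfAhat\bfr_0=\bf0$ then $\calK_1(\bfAhat,\bfr_0)=\Span\{\bfr_0\}$ but $\bfAhat\calK_1(\bfAhat,\bfr_0)=\{\bf0\}$, so the Galerkin/minimal-residual condition forces $\bfx_1=\bfx_0$ and the iteration stagnates; and if instead $\bfr_0=\bf0$ there is no breakdown because $\bfx_0$ is already a solution.

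There is essentially no obstacle here — the corollary is a direct reformulation of the contradiction construction inside the proof of Theorem~\ref{thmsinggmres}. The only thing requiring a word of care is the (purely notational) remark in the statement that $\bfxhat,\bfbhat\in\CC^{N\times N}$, which is evidently a typo for $\CC^{N}$; I would silently read it as $\bfxhat,\bfbhat\in\CC^{N}$ and proceed. Everything else is the one-line substitution $\bfr_0=\bfAhat(\bfxhat-\bfx_0)$ together with the observation that a Krylov vector annihilated by $\bfAhat$ admits no nontrivial correction.
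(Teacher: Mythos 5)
Your proposal is correct and follows essentially the same route as the paper, which proves the corollary simply by pointing back to the construction in the proof of Theorem~\ref{thmsinggmres}: writing $\bfx_0=\bfxhat-\bfyhat$ gives $\bfr_0=\bfAhat\bfyhat$, and the condition $\bfAhat\bfyhat\in\Ker(\bfAhat)\setminus\{\bf0\}$ is exactly ``$\bfr_0\neq\bf0$ and $\bfAhat\bfr_0=\bf0$'', which forces termination at the first step with a nonzero residual. Your added justification of why this kernel condition is equivalent to a first-iteration breakdown, and your reading of $\CC^{N\times N}$ as a typo for $\CC^N$, are both appropriate.
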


    We next have a closer look at condition~2 in Theorem~\ref{thmsinggmres}.  If
    we had $\Ker(\bfAhat) = \Ker(\bfAhat^\H)$, then $\Image(\bfAhat)^\perp =
    \Ker (\bfAhat^\H)$ would imply
	$$
        \left\{ \bf0 \right\} = \Image(\bfAhat)^\perp \cap \Image(\bfAhat) =
    	\Ker (\bfAhat^\H) \cap \Image(\bfAhat)=
    	\Ker (\bfAhat) \cap \Image(\bfAhat),
    $$
    so that condition~2 would hold. Thus condition~2 in
    Theorem~\ref{thmsinggmres} is fulfilled for any Hermitian matrix $\bfAhat$.
    For a general non-Hermitian matrix, however, it seems difficult to determine
    a deflated matrix with $\Ker(\bfAhat) = \Ker(\bfAhat^\H)$.
    However, for the
    deflated system \eqref{AhxbhGM} we can derive another condition that is
    equivalent with condition 2 (and hence condition 1) in
    Theorem~\ref{thmsinggmres}.

	\begin{corollary}\label{corgmresbreak}
        For the deflated system \eqref{AhxbhGM}, condition 2 in
        Theorem~\ref{thmsinggmres} is satisfied if and only if $\calU \cap
        (\bfA \calU)^\perp = \left\{ \bf0 \right\}$.  In particular, the latter
        condition is satisfied when $\calU$ is an exactly $\bfA$-invariant
        subspace, i.e., when $\bfA \calU = \calU$.
	\end{corollary}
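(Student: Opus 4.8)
The plan is to identify $\Ker(\bfAhat)$ and $\Image(\bfAhat)$ explicitly for the deflated matrix $\bfAhat=\bfPA\bfA$ from \eqref{AhxbhGM} and then intersect them. Recall from \eqref{QPP} with $\bfB=\bfA$ and from Lemma~\ref{lemproj} that $\bfPA$ is the orthogonal projection onto $(\bfA\calU)^\perp$ along $\bfA\calU$, so $\Ker(\bfPA)=\bfA\calU$ and $\Image(\bfPA)=(\bfA\calU)^\perp$; recall also that $\bfA$ is nonsingular.

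First I would determine the kernel. Using $\bfAhat=\bfA\bfQA$ (statement~\ref{PAAPt} of Lemma~\ref{lemproj}) and the invertibility of $\bfA$, we get that $\bfAhat\bfv=\bf0$ holds if and only if $\bfQA\bfv=\bf0$, that is, if and only if $\bfv$ lies in the space $\calU$ along which $\bfQA$ projects (statement~\ref{lemprojPtU} of Lemma~\ref{lemproj}). Hence $\Ker(\bfAhat)=\calU$. Equivalently, $\bfPA\bfA\bfv=\bf0$ means $\bfA\bfv\in\Ker(\bfPA)=\bfA\calU$, and injectivity of $\bfA$ then forces $\bfv\in\calU$.

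Next I would determine the image. Since $\bfA$ is nonsingular, hence surjective, $\Image(\bfAhat)=\Image(\bfPA\bfA)=\bfPA(\Image(\bfA))=\bfPA(\CC^N)=\Image(\bfPA)=(\bfA\calU)^\perp$. As a consistency check, $\dim\Ker(\bfAhat)+\dim\Image(\bfAhat)=k+(N-k)=N$. Combining the two identities yields $\Ker(\bfAhat)\cap\Image(\bfAhat)=\calU\cap(\bfA\calU)^\perp$, so condition~2 of Theorem~\ref{thmsinggmres}, namely $\Ker(\bfAhat)\cap\Image(\bfAhat)=\{\bf0\}$, is satisfied if and only if $\calU\cap(\bfA\calU)^\perp=\{\bf0\}$. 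For the final assertion, if $\calU$ is exactly $\bfA$-invariant, i.e.\ $\bfA\calU=\calU$, then $\calU\cap(\bfA\calU)^\perp=\calU\cap\calU^\perp=\{\bf0\}$, which is immediate.

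I do not expect a real obstacle in carrying this out; the only step requiring a moment of care is the image computation, where the identification $\Image(\bfPA\bfA)=\Image(\bfPA)$ relies on surjectivity of $\bfA$, i.e.\ precisely on the standing assumption that $\bfA$ is nonsingular. One could equally well phrase the kernel step via $\bfAhat=\bfA\bfQA$ or directly via $\bfPA\bfA$; both routes use only Lemma~\ref{lemproj} and invertibility of $\bfA$, so neither presents any real difficulty.
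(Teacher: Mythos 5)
Your proposal is correct and follows essentially the same route as the paper: compute $\Ker(\bfAhat)=\bfA^{-1}\Ker(\bfPA)=\calU$ and $\Image(\bfAhat)=\Image(\bfPA)=(\bfA\calU)^\perp$ using Lemma~\ref{lemproj} and the nonsingularity of $\bfA$, then intersect. The only cosmetic difference is that you offer the alternative kernel computation via $\bfAhat=\bfA\bfQA$, which the paper does not need.
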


	\begin{proof}
        Using the properties of the projection $\bfPA$ from Lemma~\ref{lemproj}
        and the fact that $\bfA$ is nonsingular, we obtain
		\begin{align*}
			\Ker(\bfAhat)&=	\Ker(\bfPA \bfA) = \bfA^{-1} \Ker(\bfPA) = \calU ,\\
			\Image(\bfAhat)&= \Image(\bfPA \bfA) = \Image(\bfPA) = (\bfA
            \calU)^\perp.
		\end{align*}
        If $\bfA
        \calU = \calU$, then $\calU \cap (\bfA \calU)^\perp = \left\{ \bf0
        \right\}$ holds trivially.
	\end{proof}

    For a nonsingular matrix condition 2 in Theorem~\ref{thmsinggmres} always
    holds trivially, and hence a breakdown of \GMRES{} can only occur if the
    method is applied to a linear algebraic system with a singular matrix (this
    fact has been known since the method's introduction in 1986~\cite{SaaS86}).
    Breakdowns have also been analyzed by de Sturler~\cite{Stu96} in the context
    of the \GCRO{} method (see the end of Section~\ref{secrminresbreak} below
    for further comments).  We want to point out that it is unlikely that a
    random initial guess lies in the subspace $\calX_0$ specified in
    Corollary~\ref{corx0break} for which the \GMRES{} method breaks down in the
    first step. However, a general $\calU$ may lead to a breakdown. To
    illustrate the problem of breakdowns in our context, we give an example that
    is adapted from \cite[Example~1.1]{BroW97}.

	\begin{example}\label{exbreakdown}
		{\rm Consider a linear algebraic system with
		$$\bfA = \Matrix{cc}{0 & 1\\1 & 0}, \quad \bfb = \ColVec{1\\0},$$
        so that the unique solution is given by the vector $[0,1]^\T$. Let the
        augmentation space be defined by $\bfU_1= [1,0]^\T$, then
		$$\bfPA = \Matrix{cc}{1 & 0\\0 &0},\quad
		\bfAhat = \bfPA \bfA = \Matrix{cc}{0 & 1 \\ 0 & 0}, \quad
		\bfbhat = \bfPA \bfb = \ColVec{1\\0}.$$
        If $\bfx_0$ is the zero vector, then $\bfrhat_0 = \bfbhat$ and $\bfAhat
        \bfrhat_0 = \bf0$, and thus \GMRES{} applied to the deflated system
        terminates at the very first iteration with the approximation $\bfx_0$.
        Since $\bfAhat \bfx_0 \neq \bfbhat$, this is a breakdown of \GMRES{}.
        Furthermore, applying the correction \eqref{xnP} to $\bfxhat_0 = \bfx_0$
        does not yield the solution of the original system $\bfA \bfx = \bfb$
        because
		\[
    		\bfQA \bfx_0 + \bfMA \bfA^\H \bfb = \bfMA \bfA^\H \bfb
            = \bfU_1 \bfU_1^\H \bfA^\H \bfb
	    	= \bf0 \neq \ColVec{0\\1}.
		\]
		}
	\end{example}

    Corollary~\ref{corgmresbreak} states that the \GMRES{} method applied to the
    deflated system~\eqref{AhxbhGM} cannot break down if $\calU$ is an
    $\bfA$-invariant subspace. The following example shows that care has also to
    be taken with approximate $\bfA$-invariant subspaces.
	\begin{example}\label{exbreakdown2}
        {\rm Let $\alpha>0$ be a small positive number. Then
        $\bfv\DEF[0,1,\alpha]^\T$ is an eigenvector of the matrix
        \[
            \bfA\DEF \Matrix{ccc}{0 & 1 & -\alpha^{-1} \\ 1 & 0 & \alpha^{-1} \\
            0 & 0 & 1}
        \]
        corresponding to the eigenvalue $1$. Instead of $\bfv$ we use the
        perturbed vector $\bfU_2\DEF[0,1,0]^\T$ as a basis for the deflation
        space $\calU=\Image(\bfU_2)$ and obtain
        \[
            \bfA\bfU_2=\Matrix{c}{1\\0\\0},\quad
            \bfPA=\Matrix{ccc}{0&0&0\\0&1&0\\0&0&1}, \quad
            \bfPA\bfA=\Matrix{ccc}{0&0&0\\1&0&\alpha^{-1}\\0&0&1}.
        \]
        For $\bfx,\bfb\in\CC^3$ with $\bfA\bfx=\bfb$ the \GMRES{} method then
        breaks down in the first step for all $\bfx_0\in\{\bfx+\beta [1,0,0]^\T
        ~|~ \beta\neq 0\}$. Note that $\|\bfU_2 - \bfv\|_2 = \alpha$ can be
        chosen arbitrarily small. A better measure for the quality of an
        approximate invariant subspace would be the largest principal angle
        between $\calU$ and $\bfA\calU$.
		}
	\end{example}

	\section{Hermitian matrices and variants of \MINRES}
	\label{secHermitianA}

    We will now apply the results presented in Sections~\ref{secdeflaug}
    and~\ref{secgeneralA} to the case where $\bfA$ is Hermitian, nonsingular,
    and possibly indefinite.  For a Hermitian matrix the \GMRES{} method
    considered in Section~\ref{secgeneralA} is mathematically equivalent to the
    \MINRES{} method, which is based on the Hermitian Lanczos algorithm, and
    thus uses efficient three-term recurrences.

	\subsection{The \RMINRES{} method}
	\label{secrminresbreak}

    This subsection discusses the ``recycling \MINRES{} method'', or briefly
    \RMINRES{} method, developed by Wang, de Sturler and Paulino~\cite{WanSP07}.
    This method fits into the framework of Section~\ref{secdeflaug}, and the
    results presented in Section~\ref{secgeneralA} apply.  Wang \ETAL were
    interested in solving sequences of linear algebraic systems
    that exhibit only small changes from one matrix in the sequence to the
    next one, and they
    suggested to reuse information from previous solves.  The \RMINRES{} method
    consists of two main parts that can basically be analyzed separately: an
    augmented and deflated \MINRES{} solver which is based on \GCRO{} and an
    extraction procedure for the augmentation and deflation data.  In the second
    part Wang \ETAL determined harmonic Ritz vectors that correspond to harmonic
    Ritz values close to zero, and used these approximate eigenspaces for
    augmenting the Krylov subspace.  Here, we omit the extraction of the
    augmentation and deflation space and concentrate on the method for solving
    the systems. We refer to this as the {\em solver part of the \RMINRES{}
    method}.
    We point out that the extracted spaces can be arbitrary if
    there are no restrictions on the changes of the matrices in the sequence of
    linear algebraic systems.  However, the \RMINRES{} method has been presented
    in \cite{WanSP07} with an application in topology optimization where the
    extracted approximated eigenvectors of one matrix are still good
    approximations to eigenvectors of the next matrix. Furthermore, we will not
    address the preconditioning technique outlined in \cite{WanSP07} and assume
    that the given linear algebraic system is already in the preconditioned form.

    As in Section~\ref{secgeneralA}, we set $\bfB= \bfA$ and consider first the
    resulting deflated system of the form \eqref{AhxbhGM},
	\begin{equation*}
		\bfAhat \bfxhat =\bfbhat,\quad\mbox{where}\quad
		\bfAhat \ASS \bfPA \bfA,\quad \bfbhat \ASS \bfPA \bfb.
	\end{equation*}
    If we apply \MINRES{} to this linear algebraic system with an initial guess
    $\bfx_0$ and the corresponding initial residual
    $\bfrhat_0=\bfbhat-\bfAhat\bfx_0=\bfPA (\bfb-\bfA\bfx_0)$, then the iterate
    $\bfxhat_n$ and the residual $\bfrhat_n$ are characterized by the two
    conditions
	\begin{align}\label{minres}
		\bfxhat_n \in \bfx_0 + \calK_n(\bfAhat, \bfrhat_0), \quad\mbox{and}\quad
		\bfrhat_n = \bfbhat - \bfAhat \bfxhat_n \perp \bfAhat \calK_n(\bfAhat, \bfrhat_0).
	\end{align}
    This is essentially the approach of Kilmer and de
    Sturler~\cite[Section~2]{KilS06}.  Olshanskii and Simoncini~\cite{OlsS10}
    recently used a different approach where the \MINRES{} method is applied to
    the deflated system $\bfPI\bfA\bfxhat=\bfPI\bfb$ with the special initial
    guess $\bfx_0=\bfU(\bfU^\H\bfA\bfU)^{-1}\bfU^\H\bfb$. We note that the
    presentation in \cite{OlsS10} is slightly different but the above can be
    seen with minor algebraic modifications to the relations in and preceding
    Proposition~3.1 in \cite{OlsS10}.

    An attentive reader has certainly noticed that the deflated matrix $\bfAhat
    = \bfPA \bfA = \bfA - \bfA \bfMA \bfA^2$ is in general {\em not} Hermitian,
    even when $\bfA$ is Hermitian. However, as pointed out in~\cite[footnotes on
    p.~2153 and p.~2446, respectively]{KilS06,WanSP07}, a straightforward
    computation shows that
	\begin{equation}
		\calK_n (\bfPA \bfA, \bfPA \bfv) = \calK_n (\bfPA \bfA \bfPA, \bfPA \bfv) \label{KPAP}
	\end{equation}
    holds for every vector $\bfv\in\CC^N$ because $\bfPA$ is a projection. The
    matrix $\bfPA \bfA \bfPA$ is obviously Hermitian (since $\bfA$ and $\bfPA$
    are Hermitian), and hence the Krylov subspaces we work with are also
    generated by a Hermitian matrix.  It is therefore possible to implement a
    \MINRES-like method for the deflated system, which is based on three-term
    recurrences and which is characterized by the conditions \eqref{minres}.  As
    presented in Section~\ref{secgeneralA}, these conditions combined with the
    correction step \eqref{xnP"} are equivalent to the explicit use of
    augmentation, i.e., conditions \eqref{xnKnU}--\eqref{rnAKnAU}.

    The latter conditions are the basis of the solver part of the \RMINRES{}
    method by Wang, de Sturler and Paulino in~\cite[Section~3]{WanSP07}.  We
    summarize the above and give two mathematically equivalent characterizations
    of the \RMINRES{} solver applied to the system $\bfA \bfx = \bfb$ with an
    initial guess $\bfx_0$:
	\begin{enumerate}
        \item The original approach used in~\cite{WanSP07} incorporates explicit
            augmentation, which means to construct iterates $\bfx_n$ satisfying
            the two conditions
			\begin{align}
				\begin{aligned}
					\bfx_n &\in \bfx_0 + \calK_n(\bfPA \bfA, \bfPA \bfr_0) + \calU, \\
					\bfr_n &= \bfb - \bfA \bfx_n
                       \perp \bfA \calK_n(\bfPA \bfA, \bfPA \bfr_0) + \bfA \calU.
				\end{aligned}
				\label{rminresDS}
			\end{align}

        \item A mathematically equivalent approach is to apply \MINRES{} to the
            deflated system
			\begin{align}
				\bfPA \bfA \bfxhat = \bfPA \bfb
				\label{rminresMR}
			\end{align}
			and correct the resulting iterates $\bfxhat_n$ according to
			$\bfx_n = \bfQA \bfxhat_n + \bfMA \bfA \bfb$.
	\end{enumerate}

    Note that on an algorithmic level the second approach exhibits lower
    computational cost since the correction in the space $\calU$ is only carried
    out once at the end, while the \RMINRES{} solver requires one update per
    iteration.

    Since the solver part of \RMINRES{} is mathematically equivalent to
    \MINRES{} (and \GMRES{}) applied to the deflated system,
    Corollary~\ref{corgmresbreak} also applies to \RMINRES{}.  In particular,
    the method can break down for specific initial guesses if (and only if)
    $\calU \cap (\bfA \calU)^\perp \neq \left\{ \bf0 \right\}$.  Breakdowns
    cannot occur if $\calU$ is an exact $\bfA$-invariant subspace, but this is
    an unrealistic assumption in practical applications.  Note that the matrix
    $\bfA$ in Example~\ref{exbreakdown} is Hermitian, thus it also serves as an
    example for a breakdown of the \RMINRES{} solver.
    That the \RMINRES{}
    method can break down may already be guessed from the fact that this method
    is based on the \GCRO{} method and thus potentially suffers from the
    breakdown conditions for \GCRO{} derived in~\cite{Stu96}.
    However, the
    possibility of breakdowns has not been mentioned
    in~\cite{WanSP07}, and in the example of a \GCRO{} breakdown given in \cite{Stu96}
    the matrix $\bfA$ is not Hermitian. Hence this example cannot be used in the
    context of the \RMINRES{} method, which is intended for Hermitian matrices.

    In the next subsection we show how to suitably modify the \RMINRES{}
    approach to avoid breakdowns.

    \subsection{Avoiding breakdowns in deflated \MINRES{}}
	\label{secdeflminres}

    We have seen in Section~\ref{secgeneralA} that if $\Ker(\bfAhat) =
    \Ker(\bfAhat^\H)$, then condition 1 in Theorem~\ref{thmsinggmres} is
    satisfied. Consequently, if we can determine a {\em Hermitian deflated
    matrix} $\bfAhat$ and a corresponding consistent deflated system, \MINRES{}
    applied to this system cannot break down for any initial guess.

    Using the projections $\bfPA$ and $\bfQA$ from \eqref{QPP} we decompose the
    solution $\bfx$ of $\bfA \bfx = \bfb$ as
	\begin{align}
		\bfx &= \bfPA \bfx + ( \bfI - \bfPA ) \bfx = \bfPA \bfx + \bfA \bfMA \bfA \bfx
		= \bfPA \bfx + \bfA \bfMA \bfb, \label{xP} \\
		\bfx &= \bfQA \bfx + ( \bfI - \bfQA ) \bfx = \bfQA \bfx + \bfMA \bfA^2 \bfx
		= \bfQA \bfx + \bfMA \bfA \bfb. \label{xPt}
	\end{align}
    Using \eqref{xPt}, the system $\bfA \bfx = \bfb$ becomes $\bfA ( \bfQA \bfx
    + \bfMA \bfA \bfb ) =\bfb$.  With the definition of $\bfPA$ and  $\bfA \bfQA
    = \bfPA \bfA$ (cf.~Lemma~\ref{lemproj}) we see that this is equivalent to
	\[
        \bfPA \bfA \bfx = \bfPA \bfb.
    \]
    We now substitute for $\bfx$ from \eqref{xP} and obtain $\bfPA \bfA ( \bfPA
    \bfx + \bfA \bfMA \bfb) = \bfPA \bfb$ which is equivalent to
	\begin{align}
		\bfPA \bfA \bfPA \bfx = \bfPA \bfQA^\H \bfb.
		\label{PAPxPPtb}
	\end{align}

    We can show the following result for the \MINRES{} method applied to this
    symmetric system.

	\begin{theorem}
		\label{thmdeflminr}
        For each initial guess $\bfx_0 \in \CC^N$ the \MINRES{} method applied
        to the system \eqref{PAPxPPtb} yields $($in exact arithmetic$)$ a
        well-defined iterate $\bfxb_n$ at every step $n\geq 1$ until it
        terminates with a solution. Moreover, the sequence of iterates
		\begin{align}
			\bfx_n \ASS \bfQA \left( \bfPA \bfxb_n + \bfA \bfMA \bfb \right) + \bfMA \bfA \bfb
			\label{xnCorr}
		\end{align}
        is well defined. It terminates $($in exact arithmetic$)$ with the exact
        solution $\bfx$ of the original linear system $\bfA \bfx = \bfb$, and
        its residuals are given by $\bfr_n=\bfb - \bfA \bfx_n = \bfPA \bfQA^\H
        \bfb - \bfPA \bfA \bfPA \bfxb_n$.
	\end{theorem}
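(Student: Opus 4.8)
The plan is to reduce the theorem to two ingredients that are already available: the basic properties of the projections $\bfPA,\bfQA$ collected in Lemma~\ref{lemproj}, and the characterization of breakdown-free \GMRES{} for (possibly singular) Hermitian systems that follows from Theorem~\ref{thmsinggmres} together with the discussion in Section~\ref{secgeneralA}. First I would record the elementary algebraic facts that make everything run. Since $\bfA=\bfA^\H$ and $\bfB=\bfA$, the matrix $\bfMA=\bfU(\bfU^\H\bfA^2\bfU)^{-1}\bfU^\H$ is Hermitian; hence, with the definitions in \eqref{QPP}, one has $\bfPA=\bfI-\bfA\bfMA\bfA$, $\bfQA=\bfI-\bfMA\bfA^2$, $\bfQA^\H=\bfI-\bfA^2\bfMA$, and $\bfI-\bfPA=\bfA\bfMA\bfA$, and the matrix $\bfPA\bfA\bfPA$ is Hermitian (using $\bfPA=\bfPA^\H$ from Lemma~\ref{lemproj}). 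All of these matrices are well defined precisely because $\bfE_\bfA=\bfU^\H\bfA^\H\bfA\bfU$ is nonsingular, which is the standing assumption carried over from Lemma~\ref{lemproj}; consequently the iterate $\bfx_n$ defined by \eqref{xnCorr} is well defined whenever $\bfxb_n$ is.

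Next I would settle the first assertion. The system \eqref{PAPxPPtb} is consistent, since by the derivation preceding the theorem the true solution $\bfx$ of $\bfA\bfx=\bfb$ satisfies $\bfPA\bfA\bfPA\bfx=\bfPA\bfQA^\H\bfb$, so the right-hand side of \eqref{PAPxPPtb} lies in $\Image(\bfPA\bfA\bfPA)$. Because $\bfPA\bfA\bfPA$ is Hermitian, $\Ker(\bfPA\bfA\bfPA)=\Ker((\bfPA\bfA\bfPA)^\H)$, so condition~2 of Theorem~\ref{thmsinggmres} holds (exactly as noted in Section~\ref{secgeneralA} for Hermitian deflated matrices), and therefore \GMRES{} applied to \eqref{PAPxPPtb} is well defined at every step and terminates with a solution. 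Since $\bfPA\bfA\bfPA$ is Hermitian, the \MINRES{} iterates --- built on the Hermitian Lanczos process, which cannot suffer a serious breakdown --- coincide with the \GMRES{} iterates, so the same well-definedness and finite termination transfer to the iterates $\bfxb_n$.

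The core of the proof is the residual identity
\[
    \bfr_n=\bfb-\bfA\bfx_n=\bfPA\bfQA^\H\bfb-\bfPA\bfA\bfPA\bfxb_n ,
\]
valid at every step $n\geq1$; this simultaneously yields the last assertion and drives the convergence claim. To obtain it I would substitute \eqref{xnCorr} into $\bfA\bfx_n$, use $\bfA\bfQA=\bfPA\bfA$ (statement~\ref{PAAPt} of Lemma~\ref{lemproj}) to get $\bfA\bfx_n=\bfPA\bfA\bfPA\bfxb_n+\bfPA\bfA^2\bfMA\bfb+\bfA\bfMA\bfA\bfb$, and then collapse $\bfb-\bfA\bfMA\bfA\bfb=\bfPA\bfb$ and $\bfPA\bfb-\bfPA\bfA^2\bfMA\bfb=\bfPA(\bfI-\bfA^2\bfMA)\bfb=\bfPA\bfQA^\H\bfb$ using the identities from the first paragraph. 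In words, $\bfr_n$ is exactly the residual of the \MINRES{} iterate $\bfxb_n$ for the Hermitian system \eqref{PAPxPPtb}. Hence, when \MINRES{} terminates with a solution $\bfxb_n$ of \eqref{PAPxPPtb}, the right-hand side of the identity vanishes, so $\bfb-\bfA\bfx_n=\bf0$, and since $\bfA$ is nonsingular this forces $\bfx_n=\bfx$. (Alternatively one can note that any solution $\bfxb$ of \eqref{PAPxPPtb} differs from $\bfx$ by an element of $\Ker(\bfPA\bfA\bfPA)$, split that element along the decomposition $(\bfA\calU)^\perp\oplus\bfA\calU$, and check that \eqref{xnCorr} annihilates the discrepancy via $\bfQA\bfU=\bf0$; the residual route avoids describing $\Ker(\bfPA\bfA\bfPA)$ explicitly.)

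The argument is essentially bookkeeping with the three projection-type matrices plus the Hermiticity of $\bfMA$. The only genuinely delicate point is the passage from \GMRES{} to \MINRES{} for the singular Hermitian matrix $\bfPA\bfA\bfPA$, which is what allows us to invoke Theorem~\ref{thmsinggmres}; I expect this --- rather than any of the computations --- to be the step requiring the most care.
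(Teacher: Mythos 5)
Your proposal is correct and follows essentially the same route as the paper: consistency of \eqref{PAPxPPtb} plus Hermiticity of $\bfPA\bfA\bfPA$ to invoke Theorem~\ref{thmsinggmres} (via $\Ker(\bfAhat)=\Ker(\bfAhat^\H)$, exactly as noted before Section~6.2), followed by the same algebraic computation of $\bfb-\bfA\bfx_n$ using $\bfA\bfQA=\bfPA\bfA$ to identify $\bfr_n$ with the \MINRES{} residual of \eqref{PAPxPPtb}. The extra details you supply (Hermiticity of $\bfMA$, the explicit \GMRES{}-to-\MINRES{} transfer, and the nonsingularity of $\bfA$ to conclude $\bfx_n=\bfx$ at termination) are all consistent with, and slightly more explicit than, the paper's argument.
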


	\begin{proof}
        The first part follows from the fact that the system \eqref{PAPxPPtb}
        is a consistent system with a Hermitian matrix $\bfPA \bfA \bfPA$, so
        that we can apply Theorem~\ref{thmsinggmres}.
        It remains to show the second part.	The $n$-th residual of the original
        system $\bfA \bfx = \bfb$ is given by
		\begin{align*}
			\bfr_n &= \bfb - \bfA \bfx_n
			= \bfb - \bfA \left( \bfQA \left( \bfPA \bfxb_n + \bfA \bfMA \bfb \right) + \bfMA \bfA \bfb \right) \\
			&= \bfb - \bfA \bfQA \left( \bfPA \bfxb_n + \bfA \bfMA \bfb \right) - \bfA \bfMA \bfA \bfb \\
			&= \left( \bfI - \bfA \bfMA \bfA \right) \bfb - \bfPA \bfA \left( \bfPA \bfxb_n + \bfA \bfMA \bfb \right) \\
			&= \bfPA \bfb - \bfPA \bfA \bfPA \bfxb_n - \bfPA \bfA^2 \bfMA \bfb \\
			&= \bfPA \left( \bfI - \bfA^2 \bfMA \right) \bfb - \bfPA \bfA \bfPA \bfxb_n \\
			&= \bfPA \bfQA^\H \bfb - \bfPA \bfA \bfPA \bfxb_n.
		\end{align*}
        We see that $\bfr_n$ is equal to the $n$-th \MINRES{} residual for the
        system \eqref{PAPxPPtb}.  In particular, this implies that the exact
        solution of \eqref{Axb} is given by \eqref{xnCorr} once an exact
        solution $\bfxb_n$ of \eqref{PAPxPPtb} has been determined by \MINRES{}.
	\end{proof}

    When \MINRES{} is applied to the deflated system~\eqref{PAPxPPtb}, the
    Hermitian iteration matrix $\bfPA\bfA\bfPA$ can again be replaced by the
    non-Hermitian matrix $\bfPA\bfA$ (cf.~Section~\ref{secrminresbreak}).

    The following theorem shows that a modification of the initial guess
    suffices to make the solver part of the \RMINRES{} method mathematically
    equivalent to \MINRES{} applied to the system \eqref{PAPxPPtb}.

	\begin{theorem}
		\label{thmrminresx0}
		We consider the following two approaches:
		\begin{enumerate}
            \item The solver part of the \RMINRES{} method applied to $\bfA \bfx =
                \bfb$ with the initial guess $\bfxhat_0 \ASS \bfPA \bfx_0 + \bfA
                \bfMA \bfb$ and resulting iterates $\bfx_n$ and residuals
                $\bfr_n = \bfb - \bfA \bfx_n$.
            \item \label{rmindefl} The \MINRES{} method applied to
                \eqref{PAPxPPtb} with the initial guess $\bfx_0$ and resulting
                iterates $\bfxb_n$ and residuals $\bfrb_n \ASS \bfPA \bfQA^\H
                \bfb - \bfPA \bfA \bfPA \bfxb_n$.
		\end{enumerate}
        Both approaches are equivalent in the sense that $\bfx_n = \bfQA ( \bfPA
        \bfxb_n + \bfA \bfMA \bfb) + \bfMA \bfA \bfb$ and $\bfr_n = \bfrb_n$.
	\end{theorem}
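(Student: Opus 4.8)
The plan is to reduce both approaches to one and the same underlying \MINRES{} iteration and then to match the iterates and residuals term by term.

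First I would rewrite approach~1 using the equivalence established in Section~\ref{secrminresbreak} (which is Theorem~\ref{thmequi} with $\bfB=\bfA$): the solver part of \RMINRES{} applied to $\bfA\bfx=\bfb$ with initial guess $\bfxhat_0$, i.e.\ conditions \eqref{rminresDS}, is equivalent to applying \MINRES{} to the deflated system \eqref{rminresMR}, $\bfPA\bfA\bfxhat=\bfPA\bfb$, with the \emph{same} initial guess $\bfxhat_0$, so that the iterates satisfy $\bfxhat_n\in\bfxhat_0+\calK_n(\bfPA\bfA,\bfrhat_0)$ and $\bfrhat_n\DEF\bfPA\bfb-\bfPA\bfA\bfxhat_n\perp\bfA\,\calK_n(\bfPA\bfA,\bfrhat_0)$ (the reduced Galerkin condition of Section~\ref{secgeneralA}), with correction $\bfx_n=\bfQA\bfxhat_n+\bfMA\bfA\bfb$ and $\bfr_n=\bfrhat_n$; here $\bfrhat_0\DEF\bfPA\bfb-\bfPA\bfA\bfxhat_0$. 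Hence it suffices to prove that this deflated \MINRES{} iterate equals $\bfxhat_n=\bfPA\bfxb_n+\bfA\bfMA\bfb$, since substituting this into the correction reproduces the asserted formula $\bfx_n=\bfQA(\bfPA\bfxb_n+\bfA\bfMA\bfb)+\bfMA\bfA\bfb$ (which is exactly \eqref{xnCorr}), and a short computation --- using that $\bfA$ and $\bfMA$ are Hermitian, so that $\bfI-\bfA^2\bfMA=\bfQA^\H$ --- gives $\bfr_n=\bfrhat_n=\bfPA\bfb-\bfPA\bfA(\bfPA\bfxb_n+\bfA\bfMA\bfb)=\bfPA\bfQA^\H\bfb-\bfPA\bfA\bfPA\bfxb_n=\bfrb_n$.

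Next I would record the two facts that make the matching possible. Plugging $\bfxhat_0=\bfPA\bfx_0+\bfA\bfMA\bfb$ into $\bfrhat_0$ and again using $\bfI-\bfA^2\bfMA=\bfQA^\H$ shows $\bfrhat_0=\bfPA\bfQA^\H\bfb-\bfPA\bfA\bfPA\bfx_0$, which is precisely the initial residual $\bfrb_0$ of the Hermitian system \eqref{PAPxPPtb} for the initial guess $\bfxb_0=\bfx_0$; in particular $\bfrhat_0=\bfrb_0\in\Image(\bfPA)$. Since $\bfPA$ is a projection, identity \eqref{KPAP} gives $\calK_n(\bfPA\bfA,\bfrhat_0)=\calK_n(\bfPA\bfA\bfPA,\bfrhat_0)$, and because $\bfrhat_0\in\Image(\bfPA)$ this subspace lies entirely in $\Image(\bfPA)$, so $\bfPA$ acts there as the identity; in particular it coincides with the Krylov subspace $\calK_n(\bfPA\bfA\bfPA,\bfrb_0)$ used by \MINRES{} in approach~2. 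Then I would verify that $\bfxhat_n\DEF\bfPA\bfxb_n+\bfA\bfMA\bfb$ meets the two characterizing conditions of the deflated \MINRES{} iterate. For the affine membership, $\bfxhat_n-\bfxhat_0=\bfPA(\bfxb_n-\bfx_0)=\bfxb_n-\bfx_0$, because $\bfxb_n-\bfx_0\in\calK_n(\bfPA\bfA\bfPA,\bfrb_0)\subseteq\Image(\bfPA)$, and this lies in $\calK_n(\bfPA\bfA\bfPA,\bfrb_0)=\calK_n(\bfPA\bfA,\bfrhat_0)$. For the Galerkin condition, I already have $\bfrhat_n=\bfrb_n\in\Image(\bfPA)$, hence for every $\bfw\in\calK_n(\bfPA\bfA,\bfrhat_0)$ (so $\bfPA\bfw=\bfw$) the Hermiticity of $\bfPA$ yields $\IP{\bfA\bfw}{\bfrhat_n}=\IP{\bfA\bfw}{\bfPA\bfrhat_n}=\IP{\bfPA\bfA\bfPA\bfw}{\bfrhat_n}=\IP{\bfPA\bfA\bfPA\bfw}{\bfrb_n}=0$, the last equality because $\bfrb_n$ is the \MINRES{} residual for \eqref{PAPxPPtb} and is therefore orthogonal to $\bfPA\bfA\bfPA\,\calK_n(\bfPA\bfA\bfPA,\bfrb_0)$. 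Thus $\bfxhat_n=\bfPA\bfxb_n+\bfA\bfMA\bfb$ is the deflated \MINRES{} iterate, and since $\bfxb_n$ is well defined at every step $n\geq1$ by Theorem~\ref{thmdeflminr}, so is $\bfx_n$; the two displayed relations of the theorem then follow from the first paragraph.

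The step I expect to be the main obstacle is the Galerkin verification above: one must keep straight that the test space of deflated \MINRES{} is $\bfA\,\calK_n(\bfPA\bfA,\bfrhat_0)$ (as reduced in Section~\ref{secgeneralA}), not $\bfPA\bfA\bfPA\,\calK_n(\cdots)$, and the identification of the two orthogonality conditions genuinely relies on having \emph{both} $\bfrhat_n\in\Image(\bfPA)$ and the whole Krylov subspace inside $\Image(\bfPA)$, so that the extra Hermitian projections can be moved to the correct side. Everything else --- the identity $\bfI-\bfA^2\bfMA=\bfQA^\H$, the reduction via \eqref{KPAP}, and the bookkeeping of the final correction --- is routine given Lemma~\ref{lemproj} and the results of Sections~\ref{secgeneralA}--\ref{secrminresbreak}.
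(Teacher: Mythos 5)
Your proposal is correct and follows essentially the same route as the paper's proof: both identify the deflated \MINRES{} iterates for \eqref{rminresMR} (started from $\bfxhat_0$) with $\bfPA\bfxb_n+\bfA\bfMA\bfb$ by checking $\bfrhat_0=\bfrb_0$, invoking \eqref{KPAP}, using that $\bfPA$ acts as the identity on the relevant Krylov subspace, and matching the Galerkin conditions, before appealing to the equivalence with the \RMINRES{} solver from Section~\ref{secrminresbreak}. The only difference is direction of argument (you verify a candidate against the characterizing conditions, the paper computes forward from $\bfxb_n=\bfx_0+\bfV_n\bfy_n$), which is immaterial.
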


	\begin{proof}
        Let us start with the \MINRES{} method applied to \eqref{PAPxPPtb},
        which constructs iterates $\bfxb_n = \bfx_0 + \bfV_n \bfy_n$, where
        $\bfV_n \in \CC^{N \times n}$ is of full rank $n$ such that
        $\Image(\bfV_n)=\calK_n(\bfPA \bfA \bfPA ,\bfP \bfQ^\H \bfr_0)$.  Then
        $\bfPA \bfV_n =\bfV_n$ and the corrected iterates are
		\begin{align}
			\bfx_n & = \bfQA ( \bfPA (\bfx_0 + \bfV_n \bfy_n) + \bfA \bfMA \bfb) + \bfMA \bfA \bfb
			= \bfQA ( \bfxhat_0 + \bfV_n \bfy_n ) + \bfMA \bfA \bfb \notag \\
			&= \bfQA \bfxhat_n + \bfMA \bfA \bfb, \label{MRcorr}
		\end{align}
        with $\bfxhat_n \DEF \bfxhat_0 + \bfV_n \bfy_n$. For $n>0$ the $n$-th
        residual of $\bfxb_n$ with respect to the system \eqref{PAPxPPtb} is
        \begin{align*}
			\bfrb_n &= \bfPA \bfQA^\H \bfb - \bfPA \bfA \bfPA \bfxb_n
			= \bfPA ( \bfQA^\H \bfb - \bfA \bfPA \bfx_0 - \bfA \bfV_n \bfy_n) \\
			&= \bfPA( \bfb - \bfA (\bfPA \bfx_0 + \bfA \bfMA \bfb + \bfV_n \bfy_n))
			= \bfPA \bfb - \bfPA \bfA \bfxhat_n \FED \bfrhat_n.
		\end{align*}
        This is the residual of $\bfxhat_n$ with respect to the system
        \eqref{rminresMR}. We also have
		\begin{align*}
			\bfrhat_0 = \bfPA \bfb -\bfPA \bfA \bfxhat_0
            = \bfPA \bfQA^\H \bfb - \bfPA \bfA \bfPA \bfx_0 = \bfrb_0,
		\end{align*}
        and thus the starting vectors of the Krylov subspace for both methods
        are equal. Because of \eqref{KPAP} also the Krylov subspaces are equal.
        From the definition of the Krylov subspaces we immediately obtain
		\begin{align*}
			\bfrb_n \perp \bfPA \bfA \bfPA \calK_n(\bfPA \bfA \bfPA ,\bfrb_0)
			\quad \LLRA \quad
			\bfrhat_n \perp \bfPA \bfA \calK_n(\bfPA \bfA, \bfrhat_0).
		\end{align*}
        We can now see that the iterates $\bfxhat_n$ are the iterates of
        \MINRES{} applied to \eqref{rminresMR} with the initial guess $\bfxhat_0$.
        Along with the correction \eqref{MRcorr} this was shown to be equivalent
        to the \RMINRES{} solver applied to $\bfA \bfx = \bfb$ with the initial guess
        $\bfxhat_0$ (cf.~Section~\ref{secrminresbreak}).
	\end{proof}

    This means that (in exact arithmetic) breakdowns in the solver part of the
    \RMINRES{} method can be prevented by either adapting the right-hand side to
    $\bfQ^\H\bfb$ and correcting the approximate solution at the end according
    to Theorem~\ref{thmdeflminr} or by choosing the adapted initial guess
    $\bfxhat_0$ defined in Theorem~\ref{thmrminresx0}. Both choices do not
    increase the computational cost significantly since these computations only
    need to be carried out once. A similar special initial guess has also been
    used in~\cite{TanNVE09} to obtain a robust deflation-based preconditioner
    for the \CG{} method; compare the A-DEF2 method in~\cite[Table~2]{TanNVE09}.

    \subsection{Numerical experiments}
    \label{secnumexp}

    In this subsection, we will show the numerical behavior of selected Krylov
    subspace methods discussed above.  Detailed numerical experiments with the
    deflated \CG{} method (cf.~Section~\ref{secHpdA}) and equivalent approaches
    have been presented in~\cite{TanNVE09}.  Here, we will focus on the solver
    part of the \RMINRES{} method and the deflated \MINRES{} method in order to
    numerically illustrate the phenomenon of breakdowns that have only been
    described theoretically so far (cf.~Sections~\ref{secrminresbreak} and
    \ref{secdeflminres}).  Both methods are implemented in MATLAB with
    three-term Lanczos recurrences and Givens rotations for solving the least
    squares problem. All residuals have been computed explicitly in each
    iteration.

	\begin{example} \label{ex:evals}
		\rm
        In this example we use a matrix $\bfA=\bfW^\H \bfD \bfW \in \RR^{2m
        \times 2m}$, $m=50$, where $\bfD=\diag(\lambda_1,\dots,\lambda_{2m})$
        with $\lambda_j=\sqrt{j}$, $\lambda_{m+j}=-\sqrt{j}$ for $j=1,\dots,m$
        and $\bfW = [\bfw_1, \dots,\bfw_{2m}]$ is a randomly generated
        orthogonal matrix. We consider a matrix $\bfU=[u_1,\dots,u_k]$ whose
        columns are pairwise orthogonal eigenvectors of $\bfA$, i.e. $\bfA \bfU
        = \bfU \bfD_\bfU$ and $\bfU^\H \bfU = \bfI_k$ with a diagonal matrix
        $\bfD_\bfU=\diag(\lambda_{j_1},\dots,\lambda_{j_k})$ for
        $0<j_1<\dots<j_k<2m$. This means that $\calU=\Image(\bfU)$ is an exact
        $\bfA$-invariant subspace.  Then a straightforward computation reveals
        that $\bfPA= \bfQA = \bfI - \bfU \bfU^\H$, which is obviously Hermitian,
        and
		\[
			\bfPA \bfA \bfPA = \bfPA \bfA \bfQA = \bfPA^2 \bfA = \bfPA \bfA, \qq
			\bfPA \bfQA^\H = \bfPA^2 =\bfPA.
		\]
        By comparing the correction steps of \RMINRES{} and deflated \MINRES{}
        (cf.~Sections~\ref{secrminresbreak} and \ref{secdeflminres}) and using
        $\bfPA \bfA \bfMA = \bf0$, we can see that both methods are
        mathematically equivalent if $\calU$ is an exact invariant subspace.

		\begin{figure}
			\begin{center}
				\includegraphics[width=0.8\textwidth]{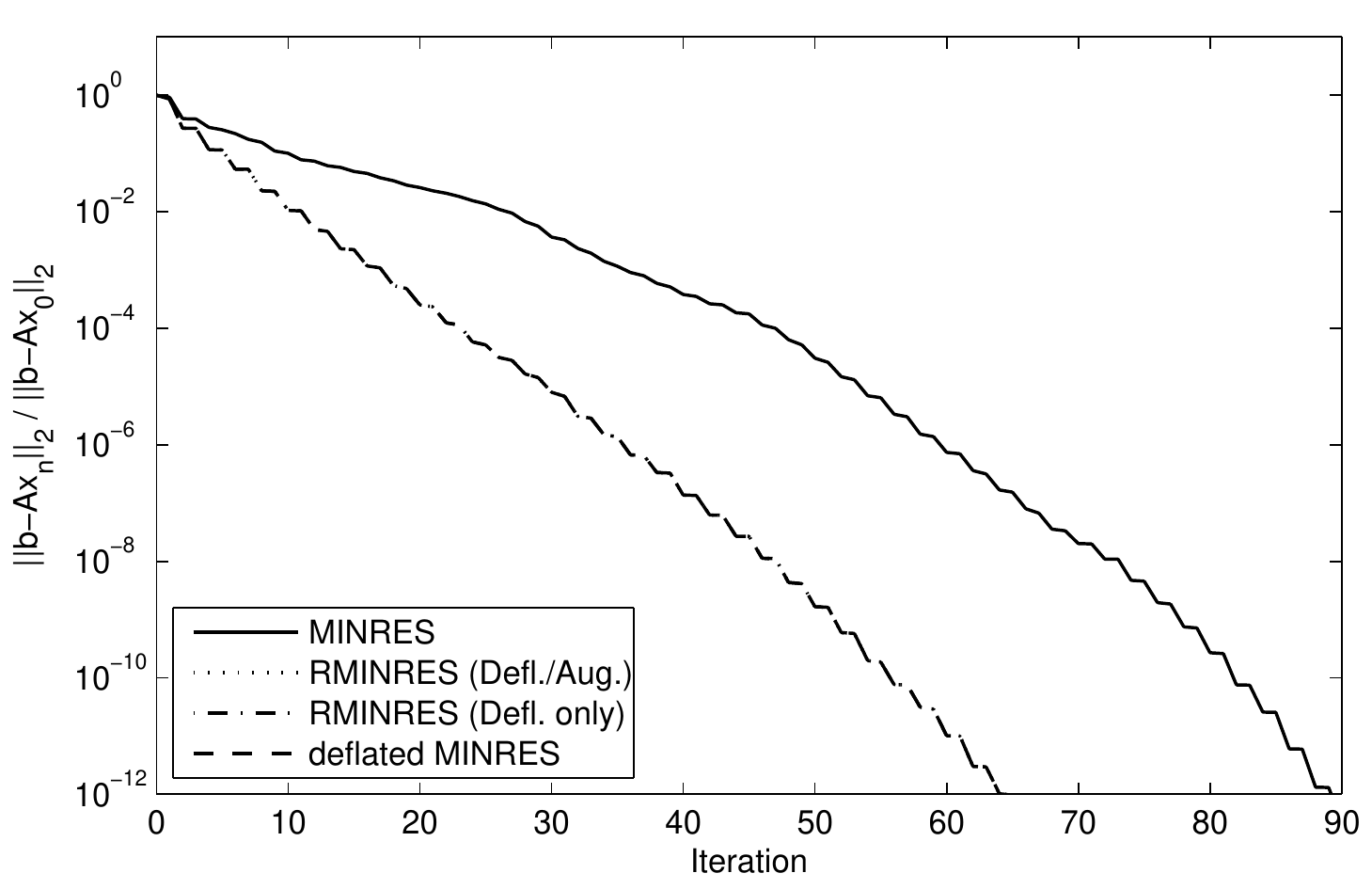}
			\end{center}
            \caption{Convergence history for Example \ref{ex:evals}.  The
            convergence curves of both \RMINRES{} solver implementations and the
            deflated \MINRES{} method coincide.  }
			\label{fig:evals}
		\end{figure}

        We solve the system $\bfA \bfx=\bfb$ with a random right-hand side
        $\bfb$ and the initial guess $\bfx_0=\bf0$. In Figure~\ref{fig:evals} we
        show the relative residual norms of the solvers
		\begin{itemize}
			\item \MINRES{} (solid line),
            \item \RMINRES{} with explicit augmentation and deflation (dotted
                line) according to Wang \ETAL~\cite{WanSP07}; 
                cf.~\eqref{rminresDS},
            \item \RMINRES{} with deflation only (dash-dotted line), i.e., the
                residual norms of \MINRES{} applied to the system $\bfPA \bfA
                \bfx = \bfPA \bfb$; cf.~\eqref{rminresMR},
            \item deflated \MINRES{} (dashed line), i.e., the residual norms of
                \MINRES{} applied to the system $\bfPA \bfA \bfPA \bfx = \bfPA
                \bfQA^\H \bfb$; cf.~Section~\ref{secdeflminres}.
		\end{itemize}
        For the last three methods we used the matrix
        $\bfU=[\bfw_1,\dots,\bfw_5,\bfw_{51},\dots,\bfw_{55}]$ which contains
        the eigenvectors associated with the $10$ eigenvalues of $\bfA$ of
        smallest absolute value.  Thus the deflation space $\calU$ has dimension
        $10$. We have shown above that
        the two implementations of \RMINRES{} and the deflated
        \MINRES{} method are mathematically equivalent, and in this example the
        three convergence curves corresponding to these methods indeed coincide;
        see Figure~\ref{fig:evals}.
	\end{example}

	\begin{example} \label{ex:break}
		\rm
        We now investigate breakdowns and near-breakdowns of the
        \RMINRES{} method using a set of artificially constructed examples. Of
        course, the occurrence of an exact breakdown as in the following
        examples will be rare in practical applications.

        For our construction
        we use the same matrix $\bfA$ as in
        Example~\ref{ex:evals} and we construct a subspace $\calU$ for which
        $\calU \cap (\bfA \calU)^\perp \neq \{\bf0\}$.  Thus, the condition that
        guarantees a breakdown-free \RMINRES{} computation is violated; 
        cf.~Section~\ref{secrminresbreak}.
        To construct the subspace $\calU$ we choose an integer $k$, $0<k<m$, and
        we define $\bfW_1=[\bfw_{i_1},\dots,\bfw_{i_k}]$ and
        $\bfW_2=[\bfw_{m+i_1},\dots,\bfw_{m+i_k}]$ for indices $0 < i_1
        <\dots<i_k <m$.  With
        $\bfD_\bfU=\diag(\lambda_{i_1},\cdots,\lambda_{i_k})$ we obtain $\bfA
        \bfW_1 = \bfW_1 \bfD_\bfU$ and $\bfA \bfW_2 = - \bfW_2 \bfD_\bfU$
        because of the symmetry of the spectrum of $\bfA$.  We now choose the
        matrix $\bfU = \bfW_1 + \bfW_2$. Applying $\bfA$ yields $\bfA \bfU =
        (\bfW_1 - \bfW_2) \bfD_\bfU$ and using the fact that $\bfW$ is unitary
        shows that $\bfU^\H \bfA \bfU = \bf0$, or equivalently $\calU \subset
        (\bfA \calU)^\perp$.  The proof of Theorem~\ref{thmsinggmres} gives us a
        way to construct an initial guess which leads to an immediate breakdown
        of \RMINRES{}.  For an arbitrary $\bf0 \neq \bfu \in \calU$ we choose
        $\bfx_0 = \bfA^{-1} (\bfb - \bfu)$.  Because of $\calU \perp \bfA \calU$
        we have $\bfPA \bfu = \bfu$ and the initial residual of \RMINRES{} is
        $\bfr_0 = \bfPA \bfb - \bfPA \bfA \bfx_0 = \bfu$. The breakdown then
        occurs in the first iteration because $\bfPA \bfA \bfr_0 = \bfPA \bfA
        \bfu = \bf0$ since $\bfA \bfu \in \bfA \calU = \Ker(\bfPA)$.
        For these constructed initial guesses the \RMINRES{} method indeed
        breaks down immediately in numerical experiments, whereas the deflated
        \MINRES{} method finds the solution after one step.
        There is no need to plot these results.

		\begin{figure}
			\begin{center}
				\subfigure[Unperturbed deflation space $\bfU^{(1)}$]{
                    \includegraphics[width=0.8\textwidth]{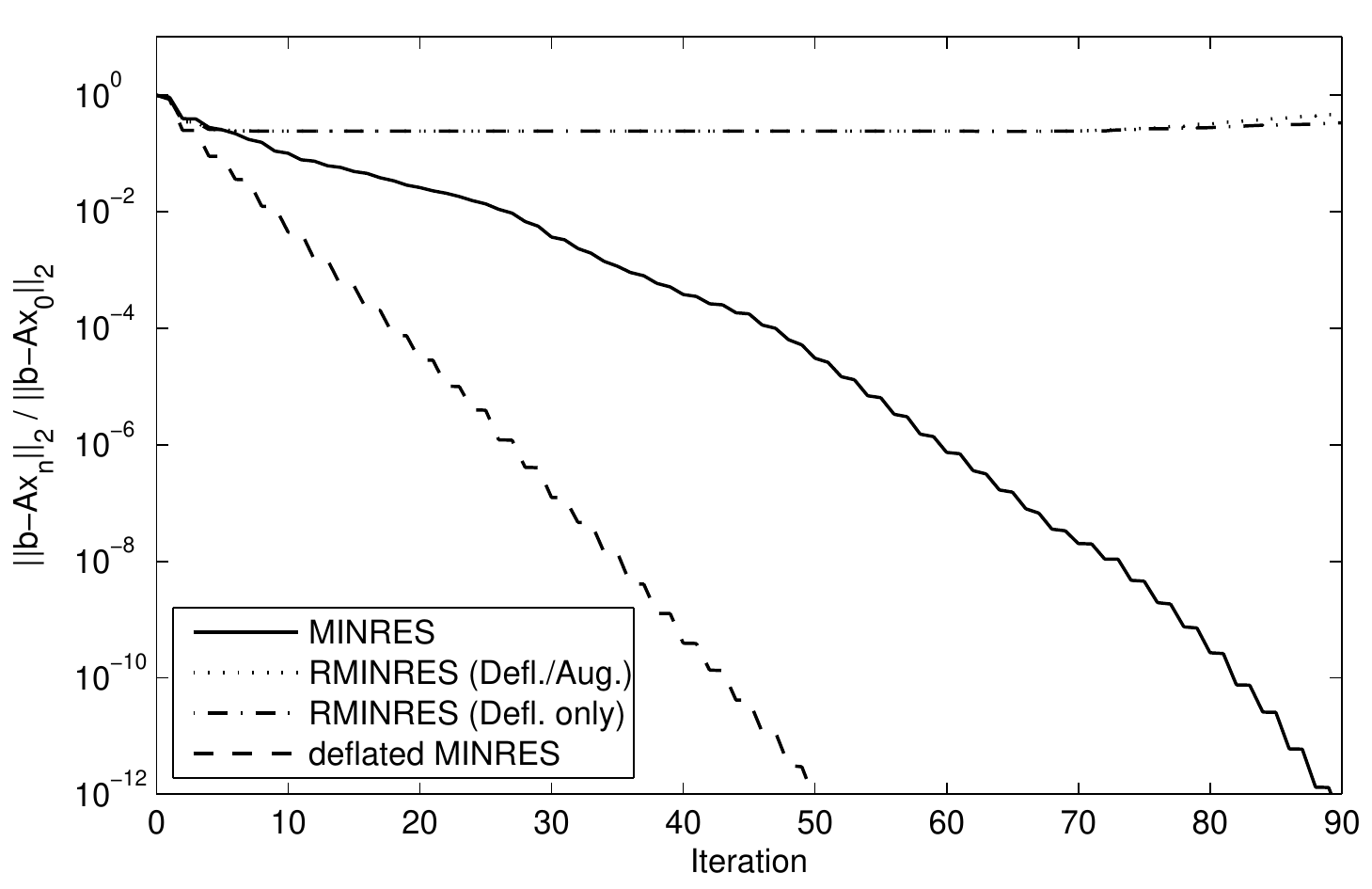}
                    \label{fig:break1}
				}
				\subfigure[Perturbed deflation space $\bfU^{(2)}= \bfU^{(1)} + \bfE$]{
                    \includegraphics[width=0.8\textwidth]{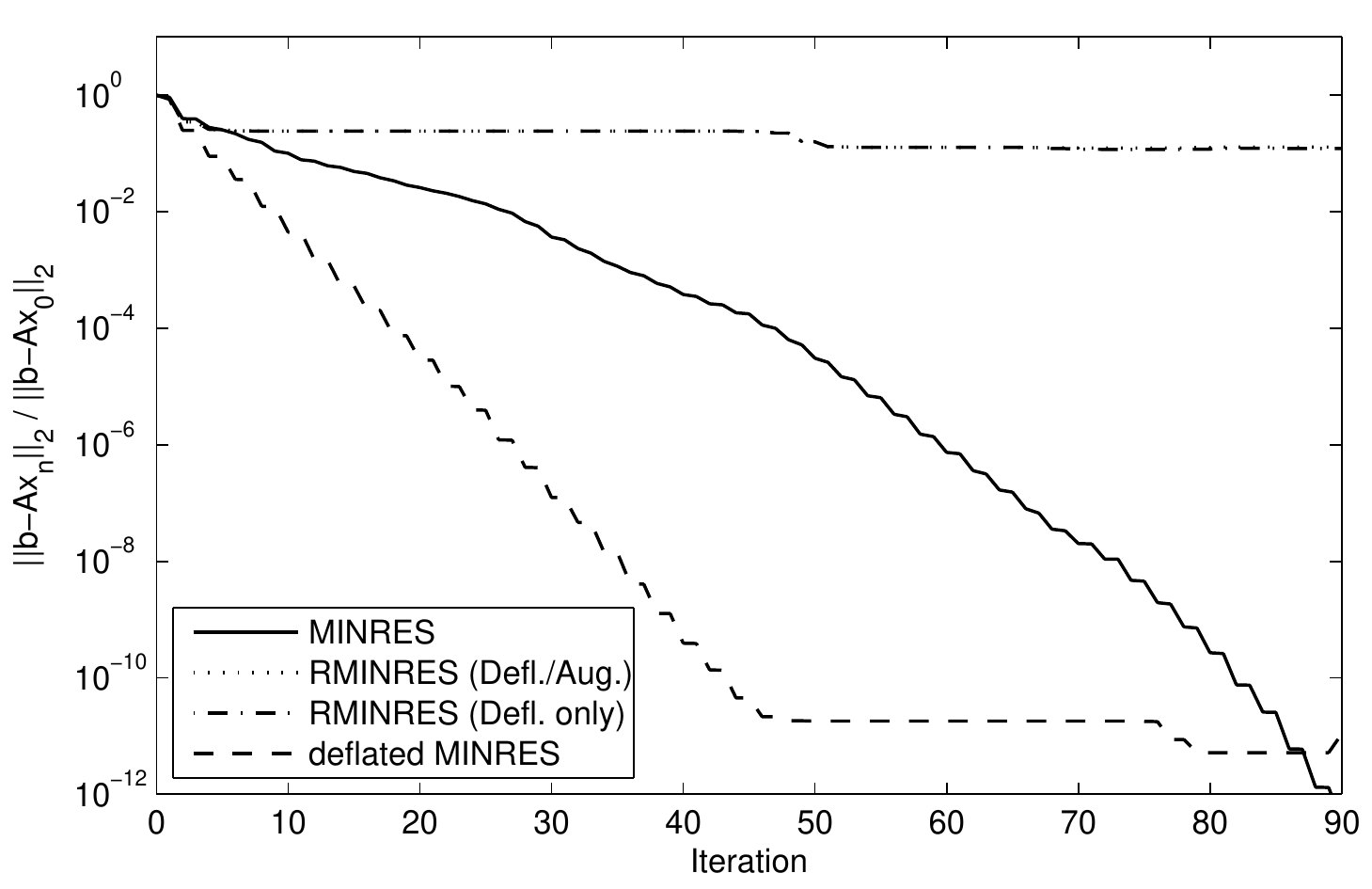}
                    \label{fig:break2}
				}
			\end{center}
            \caption{Convergence history for Example \ref{ex:break}.  The
            convergence curves of both \RMINRES{} solver implementations
            coincide.  }
			\label{fig:break}
		\end{figure}

        Of greater
        interest are situations with perturbed data.  Interestingly,
        randomly perturbed initial guesses lead to a breakdown of
        \RMINRES{} with the previously constructed deflation space as well.
        In Figure~\ref{fig:break1} we show the relative residual norms of the
        solvers listed above applied to the same $\bfA$ and $\bfb$ as in the
        previous example and with the matrix $\bfU^{(1)} =
        [\bfw_1+\bfw_{51},\dots,\bfw_{10}+\bfw_{60}]$.

        Furthermore, breakdowns also occur when we perturb the deflation space.
        Figure~\ref{fig:break2} shows the results for a perturbed matrix
        $\bfU^{(2)}= \bfU^{(1)} + \bfE$ with a random $\bfE \in \CC^{100 \times
        10}$ and $||\bfE||_2=10^{-10}$. The used initial guess is the same perturbed
        initial guess as in the experiment conducted for
        Figure~\ref{fig:break1}.

        Note that both \RMINRES{} implementations suffer from a breakdown after
        a few steps with both matrices $\bfU^{(1)}$ and $\bfU^{(2)}$. With the
        unperturbed matrix $\bfU^{(1)}$ the deflated \MINRES{} method converges
        to the solution with a relative residual smaller than $10^{-12}$, while in
        the case of the perturbed matrix $\bfU^{(2)}$ the method stagnates with
        a relative residual of order $10^{-11}$.  This stagnation of deflated
        \MINRES{} seems to be related to an unfavorable spectrum of $\bfPA \bfA
        \bfPA$ for these specifically constructed and perturbed matrices like
        $\bfU^{(2)}$. It is unlikely that the stagnation is caused by roundoff
        errors because the stagnation also occurs (up to iteration 100) when
        full recurrences (\GMRES{}) are used instead of short recurrences
        (\MINRES{}).  Perturbing the matrix $\bfU$ from Example~\ref{ex:evals}
        whose columns are exact eigenvectors of $\bfA$ does not cause
        stagnation.  This behavior is still subject to further research.

        Note that the construction of $\calU=\Image(\bfU)$ in
        Example~\ref{ex:break} is such that $\bfA\calU\perp\calU$ which cannot
        be achieved with a (nearly) $\bfA$-invariant subspace if $\bfA$ is
        Hermitian. In~\cite{WanSP07} an approximation to an invariant subspace
        of a previous matrix in a sequence of linear algebraic systems is used.
        In this situation care has to be taken that the extracted space is still
        a good approximation to an invariant subspace of the current matrix.
        However, in the experiments of \cite[Section 7]{WanSP07} this seems to
        be fulfilled since stagnation has not been observed.
	\end{example}

	\section{Conclusions}

    In this paper we first analyzed theoretically the link between basic
    theoretical properties of deflated and augmented Krylov subspace methods
    whose residuals satisfy a Galerkin condition, including the minimum residual
    methods whose inclusion into the class of Galerkin methods requires a
    replacement of the standard inner product.  We proved that augmentation can
    be achieved without explicitly augmenting the Krylov subspace, but instead
    projecting the residuals appropriately and using a correction formula for
    the approximate solutions.  We discussed this result in detail for the \CG{}
    method and \GMRES{}/\MINRES{} methods, the main representatives of our
    class.  It turned out that for these methods some of our results had been
    mentioned before in the literature.

    The projections which arise from the augmentation can also be used to obtain
    a deflated system. We have seen that a left-multiplication of the original
    system with the corresponding projection yields a deflated system for which
    the \CG{} method and \GMRES{}/\MINRES{} methods implicitly achieve
    augmentation. We proved that for nonsingular Hermitian matrices the
    \MINRES{} method for the deflated system is equivalent to the solver part of
    the \RMINRES{} method introduced in~\cite{WanSP07}. While \CG{} never breaks
    down, \GMRES{}, \MINRES{} and thus \RMINRES{} may suffer from breakdowns
    when used with the deflated systems. We stated necessary and sufficient
    conditions to characterize breakdowns of these minimal residual methods. For
    Hermitian matrices, we introduced the deflated \MINRES{} method which also
    uses a Hermitian deflated matrix and proved that it cannot break down.
    These results were illustrated numerically.

    Our framework covers methods based on a specific type of Galerkin condition;
    see \eqref{xn}-\eqref{rn}. It does not include methods based on other
    conditions, in particular those that in practical methods are realized using
    the non-Hermitian Lanczos algorithms. Examples for such methods are
    \BICG{}~\cite{Fle76} and its variants including \CGS{}~\cite{Son89},
    \BICGSTAB{}~\cite{Vor92}, and \IDRS{}~\cite{SonG08}. Extending our framework
    to such methods remains a subject of further work.

    Moreover, in this paper we did not discuss or recommend practical choices of
    deflation and/or augmentation spaces. Finding spaces that lead to an
    improved convergence behavior of the deflated or augmented method is a
    highly challenging task that should be attacked with a specific application
    in mind. Similar to preconditioning, there exists no single-best strategy
    for choosing deflation or augmentation spaces in practice. Often one
    deflates (approximations of) eigenvectors corresponding to the smallest
    eigenvalues of the given matrix. For symmetric or Hermitian positive
    definite matrices, this
    strategy can be shown to reduce the ``effective condition number'', which in
    turn leads to improved convergence bounds, and actually faster convergence
    of the iterative solver; see e.g. \cite{VuiNT06}. For non-symmetric or
    non-Hermitian matrices, however, the question of effective choices of
    deflation or augmentation spaces is largely open.

    \section*{Acknowledgements}

    The authors wish to thank the anonymous referees for their comments which
    helped to improve the presentation.

	\bibliographystyle{siam}
	\bibliography{deflaug}

\end{document}